\documentclass[12pt]{amsart}
\usepackage{enumerate}
\usepackage{amsmath}
\usepackage{amssymb}
\usepackage{amsfonts}
\usepackage{amsthm, upref}


 \addtolength{\hoffset}{-1cm} \addtolength{\textheight}{1.8cm}
 \addtolength{\voffset}{-1cm} \addtolength{\textwidth}{1.8cm}
 
    \numberwithin{equation}{section}

\usepackage{bm}
         
         \bmdefine\alphab{\mathbf{\alpha}}
\bmdefine\betab{\mathbf{\beta}}
\bmdefine\sigmab{\mathbf{\sigma}}

\newcommand{\comment}[1]{}
\newcommand{\eq}{\begin{equation}}
\newcommand{\en}{\end{equation}}
\newcommand{\pp}{\mathbb{P}}
\newcommand{\qq}{\mathbb{Q}}
\newcommand{\rr}{\mathbb{R}}
\newcommand{\nn}{\mathbb{N}}

\newcommand{\ev}{\mathbb E}
\newcommand{\dd}{\, \mathrm{d}}
\newcommand{\ddno}{\mathrm{d}}  

\newcommand{\ep}{\hfill $\Box$}

\newtheorem{thm}{Theorem}
\newtheorem{asmp}[thm]{Assumption}

\newtheorem{exm}[thm]{Example}

\newtheorem{rmk}[thm]{Remark}

\newcommand{\field}[1]{\mathbb{#1}}

\newcommand{\setN}{\field{N}}

\newcommand{\setR}{\field{R}}


\DeclareMathAlphabet{\mathpzc}{OT1}{pzc}{m}{it}


\begin{document}

\title[Central limit theorems]{Small time central limit theorems for semimartingales with applications}

\author[S. Gerhold]{Stefan Gerhold}
\address{Vienna University of Technology,
Wiedner Hauptstr. 8/105-1,
A-1040 Austria}
\email{sgerhold@fam.tuwien.ac.at}
\author[M. Kleinert]{Max Kleinert}
\address{Vienna University of Technology and
arithmetica versicherungs- und finanzmathematische Beratungs-GmbH}
\email{kleinert.max@gmail.com}
\author[P. Porkert]{Piet Porkert}
\address{Christian Doppler Laboratory for Portfolio Risk Management, Vienna University of Technology,
Wiedner Hauptstr. 8/105-1,
A-1040 Austria}
\email{piet.porkert@fam.tuwien.ac.at}
\author[M. Shkolnikov]{Mykhaylo Shkolnikov}
\address{Department of Statistics \\ University of California \\ Berkeley, CA 94720-3860}
\email{mshkolni@gmail.com}

\thanks{P.~Porkert gratefully acknowledges financial support from the
Christian Doppler Research Association (CDG) and fruitful collaboration and
   support by Bank Austria
   and COR \& FJA through CDG}

\date{\today}

\begin{abstract}
We give conditions under which the normalized marginal distribution of a semimartingale converges to a Gaussian limit law as time tends to zero. In particular, our result is applicable to solutions of stochastic differential equations with locally bounded and continuous coefficients. The limit theorems are subsequently extended to functional central limit theorems on the process level. We present two applications of the results in the field of mathematical finance: to the pricing of at-the-money digital options with short maturities and short time implied volatility skews.  
\end{abstract}

\keywords{Semimartingale, central limit theorem, functional central limit theorem,
digital option, implied volatility skew}

\subjclass[2010]{Primary: 60G48; Secondary: 60F05, 60F17, 91G20} 

\maketitle

\section{Introduction}

Limit theorems for finite-dimensional stochastic processes as time goes to infinity have been a classical object of study in probability theory and many results on the existence and uniqueness of invariant distributions, the convergence of the processes to the latter and the limiting behavior of the fluctuations around the limiting distributions have been obtained (see e.g. \cite{Ha80}, \cite{JS03}, \cite{MT92}, \cite{MT93a}, \cite{MT93b} and the references therein). More recently, small time asymptotics of finite-dimensional \textit{continuous time} stochastic processes have attracted much attention. Apart from the theoretical interest, these have become of great importance in various applied fields such as mathematical finance, where the increasingly high frequency of trades in financial markets requires pricing models behaving reasonably both on very short and on long time horizons.  

\bigskip

In the works \cite{ALV07}, \cite{BGM09}, \cite{BC12}, \cite{BBF04}, \cite{FFF10},  \cite{FH09}, \cite{Ja07} and the references therein the authors study the behavior of the random variables $\ev[f(X_{t_0+\delta})|{\mathcal F}^X_{t_0}]$ for small values of $\delta>0$, where $X$ is a finite-dimensional (jump-)diffusion process, a L\'evy process or more generally a semimartingale, $(\mathcal F^X_t)_{t\geq0}$ is the filtration it generates and the function $f$ is taken from a space of suitable real-valued test functions. In \cite{BC12}, this program is carried out for general finite-dimensional semimartingales and under appropriate continuity assumptions on the characteristics of $X$ as well as smoothness assumptions on the function $f$, the almost sure limit 
\eq\label{aslimit}
\lim_{\delta\searrow0} \delta^{-1}\big(\ev[f(X_{t_0+\delta})|{\mathcal F}^X_{t_0}]-f(X_{t_0})\big) 
\en
is determined.   

\bigskip

Here, we are interested in small time Central Limit Theorems for finite dimensional semimartingales; that is, instead of the \textit{almost sure} limit \eqref{aslimit} we are concerned with the limit
\eq\label{distlimit}
\lim_{\delta\searrow0} \delta^{-1/2} \big(f(X_{\delta})-f(X_0)\big)
\en
\textit{in distribution}. More precisely, we give sufficient conditions on the semimartingale $X$ under which, for every suitable test function $f$, the limit \eqref{distlimit} exists and is given by a centered normal random variable (whose variance depends on the particular choice of the function $f$).
The most closely related result in the literature seems to be Theorem~2.5
of Doney and Maller~\cite{DoMa02}, which characterizes the \emph{L\'evy processes}
that satisfy a small time Central Limit Theorem.

\bigskip

In addition to the just described Central Limit Theorems, we prove Functional Central Limit Theorems on the process level and give two applications of our results in the field of mathematical finance: to the pricing of digital options and the asymptotics of implied volatility skews. To outline the first of the two applications, we recall that the price of a digital option with strike $K$ and maturity $\delta$ on an underlying security with price process $X$ in the presence of a constant interest rate $r>0$ is given by the formula
\eq\label{dig_price}
\ev[e^{-r\delta}\,\mathbf{1}_{\{X_\delta>K\}}]=e^{-r\delta}\,\pp(X_\delta>K). 
\en 
In the limit $\delta\searrow0$, that is for short maturities, this price tends to $0$ if $K>X_0$ (\textit{out-of-the-money} options) and to $1$ if $K<X_0$ (\textit{in-the-money} options) as soon as $X$ has right-continuous sample paths. The evaluation of the limit in the case $K=X_0$ (\textit{at-the-money} options) is however much trickier and, in general, the limit can take all values in the interval $[0,1]$ as we show below. However, if a Central Limit Theorem of the type described above holds for the semimartingale $X$, then the limit must be given by $\frac{1}{2}$. Moreover, in a special case we can bound the price in \eqref{dig_price} for any fixed value of $\delta>0$ from above and below by completely explicit functions tending to $\frac{1}{2}$ in the limit $\delta\searrow0$.
By a well known relation between digital prices and implied volatility skews,
we deduce bounds on the latter in certain models with stochastic interest rates.

\bigskip  

For the sake of a cleaner exposition, we first give the assumptions on the semimartingale $X$ and state our main results in the case of continuous trajectories.

\begin{asmp}\label{assumption} Let $T>0$, $x_0\in\setR^m$. Let $X=(X_t^1,\dots ,X_t^m)^\top_{t\in [0,T]}$ be an $\setR^m$-valued continuous semimartingale with canonical decomposition (see e.g.\ page 337 in~\cite{Ka02}) $X-x_0=M+A$, where $M$ is a continuous local martingale, and $A$ has locally finite variation. Assume that
\begin{enumerate}
\item $X_0=x_0$ a.s.;
\item  \label{compensator} there exists an a.s. positive stopping time $\tau_A$ such that a.s.
\begin{equation*}
A_t^j=\int_0^t b^j_s \dd s, \quad t\in[0,\tau_A],\quad j\in\{1,\dots,m\},
\end{equation*}
for an adapted process $b$;
\item \label{boundA} there exists a random variable $C_b$, such that $|b^j_t|\leq C_b<\infty$ for a.e.\ $t\in [0,\tau_A]$ a.s., $j\in\{1,\dots,m\}$;
\item \label{covariation} there exists an a.s.\ positive stopping time $\tau_M$ such that the covariation is a.s.
\begin{equation*}\langle M^j,M^k\rangle_t=\int_0^t\sum_{l=1}^m\sigma^{jl}_s\sigma^{kl}_s
\dd s,\quad t\in[0,\tau_M],\quad j,k\in\{1,\dots ,m\},
\end{equation*} for a progressive process $\sigma$;
\item \label{boundS} there exists a deterministic constant $C_\sigma<\infty$, such that $|\sigma^{jk}_t|\leq C_\sigma$ for a.e.\ $t\in[0,\tau_M]$ a.s., $j,k\in\{1,\dots ,m\}$;
\item as $t\searrow 0$, $\sigma_t\rightarrow L$ a.s., where $L$ is a deterministic $m\times m$-matrix;
\end{enumerate}
\end{asmp}

\medskip

With this notation the Central Limit Theorem and the Functional Central Limit Theorem for continuous semimartingales read as follows. 

\begin{thm}[Central Limit Theorem]\label{stCLT}
Let $X$ satisfy Assumption \ref{assumption}. Then for every $f:\setR^m\rightarrow\setR^n$ such that there exists an open neighborhood $U$ of $x_0$ with $f\in C^2(U,\setR^n)$, we have
\begin{equation*}
\frac{1}{\sqrt{t}}(f(X_t)-f(x_0))\xrightarrow{d}N_f\text{ as }t\searrow 0,
\end{equation*}
where $N_f$ is a normal random vector with mean 0 and covariance matrix
\begin{equation*}
V=(Df)(x_0)L(Df(x_0)L)^\top.
\end{equation*}
Here, $(Df)(x_0)$ stands for the Jacobian of $f$ at $x_0$.
\end{thm}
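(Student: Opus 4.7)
My plan is to Taylor-expand $f$ around $x_0$, argue that only the linear term survives the $\sqrt{t}$-scaling, and then apply a central limit theorem for continuous local martingales to the resulting linear functional of $M_t$. Since $f\in C^2(U)$ for some open neighborhood $U$ of $x_0$, there exist constants $C,\rho>0$ such that
\[
|f(x)-f(x_0)-(Df)(x_0)(x-x_0)|\le C|x-x_0|^2 \quad \text{whenever } |x-x_0|<\rho.
\]
Set $\tau:=\tau_A\wedge\tau_M\wedge\inf\{t\ge 0 : |X_t-x_0|\ge\rho\}$. The a.s.\ positivity of $\tau_A,\tau_M$, together with continuity of $X$ and $X_0=x_0$, yield $\tau>0$ a.s., so $\pp(\tau>t)\to 1$ as $t\searrow 0$ and the Taylor expansion is valid on events of probability tending to $1$.

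To show that the quadratic remainder and the drift contribution vanish after rescaling by $t^{-1/2}$, I would estimate $|X_t-x_0|$ on $\{\tau>t\}$. There $|A^j_t|\le C_b t$ by Assumption~\ref{assumption}(\ref{boundA}), and Doob's $L^2$-inequality applied to the stopped martingale $M^\tau$, combined with the bound $\langle M^{\tau,j}\rangle_t\le mC_\sigma^2 t$ coming from~(\ref{boundS}), gives $\ev[\sup_{s\le t}|M^{\tau,j}_s|^2]\le 4mC_\sigma^2 t$. Hence $|X_t-x_0|=O_p(\sqrt{t})$ and $|R(X_t)|/\sqrt{t}\le C|X_t-x_0|^2/\sqrt{t}=O_p(\sqrt{t})\to 0$ in probability; similarly $(Df)(x_0)A_t/\sqrt{t}=O(\sqrt{t})\to 0$.

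The heart of the argument is to show $t^{-1/2}(Df)(x_0)M_t\xrightarrow{d}N(0,V)$. Rescaling, I set
\[
Y^{(t)}_s := t^{-1/2}(Df)(x_0)M^{\tau_M}_{ts},\qquad s\in[0,1].
\]
This is a continuous $n$-dimensional local martingale starting at $0$; writing $a^{ij}:=\partial_j f^i(x_0)$, its quadratic covariation is
\[
\langle Y^{(t),i},Y^{(t),i'}\rangle_s=\frac{1}{t}\sum_{j,k,l}a^{ij}a^{i'k}\int_0^{ts\wedge\tau_M}\sigma^{jl}_u\sigma^{kl}_u\,du.
\]
Assumption~\ref{assumption}(\ref{boundS}) bounds the integrand by $C_\sigma^2$, and by~(vi) it converges a.s.\ to $L^{jl}L^{kl}$ as $u\searrow 0$; substituting $u=tv$ and invoking dominated convergence then yields $\langle Y^{(t),i},Y^{(t),i'}\rangle_s\to sV^{ii'}$ a.s.\ for each $s\in[0,1]$ as $t\searrow 0$. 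The multidimensional central limit theorem for continuous local martingales (e.g.\ Theorem~VIII.3.11 of~\cite{JS03}) then yields $Y^{(t)}\xrightarrow{d}B$ in $C([0,1],\setR^n)$, with $B$ a Brownian motion of covariance matrix $V$. Evaluating at $s=1$ gives $Y^{(t)}_1\xrightarrow{d}N(0,V)$, and since $\pp(\tau_M>t)\to 1$ the stopping can be dropped to conclude $t^{-1/2}(Df)(x_0)M_t\xrightarrow{d}N(0,V)$. Slutsky's theorem then combines the three contributions into the claimed limit.

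I expect the main obstacle to be the careful handling of the localization by $\tau_A,\tau_M$: one must verify both that the convergence of the quadratic variation of the rescaled stopped martingale holds in the mode required by the chosen version of the martingale CLT and that stopping $M$ at $\tau_M$ does not affect the distributional limit as $t\searrow 0$.
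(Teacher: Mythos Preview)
Your argument is correct and takes a genuinely different route from the paper's proof. The paper does not Taylor-expand $f$; instead it applies It\^o's formula to $f_j(X_{t\wedge\tau})$, after first invoking Doob's integral representation theorem to write the stopped local martingale $M^{t\wedge\tau}$ as a stochastic integral $\int_0^{t\wedge\tau}\sigma_s\,dB_s$ against an explicit Brownian motion $B$. The convergence of the martingale part is then obtained not via a black-box martingale CLT, but by a ``frozen coefficient'' comparison: the paper shows directly that $t^{-1/2}\sum_{k,l}\partial_l f_j(x_0)L_{lk}B^k_{t\wedge\tau}$ has the right Gaussian limit (since $t^{-1/2}B_t$ is exactly standard normal and $\pp(\tau>t)\to1$), and then uses It\^o's isometry plus dominated convergence to prove that replacing $\partial_l f_j(x_0)L_{lk}$ by $\partial_l f_j(X_s)\sigma_s^{lk}$ inside the stochastic integral introduces an error that vanishes in $L^2$.

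Your approach is more economical: the Taylor expansion avoids It\^o's formula, and the martingale CLT via convergence of the predictable bracket avoids both the Doob representation and the explicit $L^2$ comparison. As a bonus, your rescaled-process argument already yields the functional convergence on $[0,1]$ (which the paper proves separately as Theorem~\ref{fCLT}). The paper's approach, in turn, is more self-contained: the limiting Gaussian is exhibited concretely as $t^{-1/2}(Df)(x_0)LB_t$ rather than extracted from a cited theorem, and the only ingredients beyond It\^o calculus are dominated convergence and It\^o's isometry. The localization concern you flag at the end is handled in both proofs in essentially the same way---the paper also stops at $\tau$ and argues that the difference vanishes because $\pp(\tau\le t)\to0$.
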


\begin{thm}[Functional Central Limit Theorem]\label{fCLT}
Let $X$ satisfy Assumption \ref{assumption}. Then for every $f:\setR^m\rightarrow\setR^n$ such that there exists an open neighborhood $U$ of $x_0$ with $f\in C^2(U,\setR^n)$, the processes
\begin{equation*}
Y^{f,u}:=\biggl(\frac{f(X_{ut})-f(x_0)}{\sqrt{u}}\biggr)_{t\in[0,T]},\quad u\in(0,1),
\end{equation*}
converge in law to a Brownian motion with variance-covariance matrix
\begin{equation*}
V=(Df)(x_0)L(Df(x_0)L)^\top.
\end{equation*}
as $u\searrow 0$.
\end{thm}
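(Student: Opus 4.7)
The plan is to first reduce to the case $f=\mathrm{id}$ by Taylor expansion, and then prove the identity case via a martingale functional central limit theorem applied to the rescaled process $(X_{ut}-x_0)/\sqrt u$.

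For the reduction, since $f\in C^2(U,\setR^n)$ on an open neighborhood $U$ of $x_0$, Taylor's theorem yields
\[
f(y)-f(x_0) = (Df)(x_0)(y-x_0) + R(y), \qquad |R(y)| \le C|y-x_0|^2,
\]
uniformly on a closed ball $B\subset U$. Continuity of $X$ together with $X_0=x_0$ gives $\mathbb{P}(\sup_{t\le uT}|X_t-x_0|>\varepsilon)\to 0$ for every $\varepsilon>0$; in particular the path $\{X_{ut}:t\in[0,T]\}$ lies in $B$ with probability tending to one. On that event $Y^{f,u}_t = (Df)(x_0)\,Y^{\mathrm{id},u}_t + R^u_t/\sqrt u$ with
\[
\sup_{t\le T}|R^u_t|/\sqrt u \;\le\; C\sqrt u \;\sup_{t\le T}\bigl|Y^{\mathrm{id},u}_t\bigr|^2,
\]
which is $O_{\mathbb{P}}(\sqrt u)$ as soon as tightness of $Y^{\mathrm{id},u}$ has been established. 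By the continuous mapping theorem for the linear map $W\mapsto (Df)(x_0)W$, the FCLT for $Y^{\mathrm{id},u}$ transfers to $Y^{f,u}$, yielding a Brownian motion with the claimed variance-covariance matrix.

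It therefore suffices to prove that $((X_{ut}-x_0)/\sqrt u)_{t\in[0,T]}$ converges in law on $C([0,T],\setR^m)$ to $(LW_t)_{t\in[0,T]}$, where $W$ is a standard $m$-dimensional Brownian motion. Writing $X-x_0=M+A$, set $M^u_t:=M_{ut}/\sqrt u$ and $A^u_t:=A_{ut}/\sqrt u$. Fix the event $\Omega_u:=\{\tau_A\wedge\tau_M > uT\}$; since $\tau_A,\tau_M>0$ a.s., we have $\mathbb{P}(\Omega_u)\to 1$ as $u\searrow 0$. On $\Omega_u$, Assumption~\ref{assumption}~\eqref{compensator}--\eqref{boundA} give
\[
\sup_{t\le T}|A^u_t|\;=\;\sup_{t\le T}\Bigl|\sqrt u\int_0^t b_{us}\dd s\Bigr|\;\le\; C_b T\sqrt u\;\longrightarrow\;0,
\]
so $\sup_{t\le T}|A^u_t|\to 0$ in probability. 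Similarly, using \eqref{covariation}--\eqref{boundS} and the substitution $s\mapsto us$,
\[
\langle M^{u,j},M^{u,k}\rangle_t \;=\; \int_0^t \sum_{l=1}^m \sigma^{jl}_{us}\sigma^{kl}_{us}\dd s
\]
on $\Omega_u$; the deterministic bound $|\sigma^{jk}|\le C_\sigma$ and the hypothesis $\sigma_s\to L$ a.s.\ as $s\searrow 0$ yield, by dominated convergence, $\langle M^{u,j},M^{u,k}\rangle_t \to (LL^\top)^{jk}\,t$ in probability for every $t\in[0,T]$.

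These two ingredients are exactly the hypotheses of the standard FCLT for continuous local martingales (e.g.\ Theorem VIII.3.11 of \cite{JS03} or Theorem~7.1.4 of Ethier--Kurtz): a sequence of continuous local martingales whose quadratic covariations converge in probability to a deterministic continuous limit $LL^\top t$ converges in law on $C([0,T],\setR^m)$ to the continuous Gaussian martingale $LW$. Combining this with the uniform negligibility of $A^u$ via the continuous mapping theorem for path addition completes the identity case, and hence the proof. The main obstacle is the localization step: Assumption~\ref{assumption} is formulated only up to the a.s.\ positive stopping times $\tau_A,\tau_M$ and only on a neighborhood $U$, whereas the FCLT and Taylor expansion above are used globally on $[0,T]$ and on $\setR^m$; this is handled precisely by showing that $\mathbb{P}(\Omega_u)\to 1$ and $\mathbb{P}(X_{[0,uT]}\subset B)\to 1$, so that in distribution one may work as though the path-wise estimates were in force.
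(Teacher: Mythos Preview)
Your argument is correct, and it takes a genuinely different route from the paper's proof. The paper proceeds in the classical two-step fashion: it verifies finite-dimensional convergence by reusing the It\^o-formula computation from Theorem~\ref{stCLT} (after representing $M$ as a stochastic integral against a Brownian motion via Doob's theorem), and then establishes tightness by hand, chopping $[0,T]$ into blocks of length $\delta$, writing the martingale increments as time-changed Brownian motions, and applying Bernstein's inequality. Your approach instead reduces to $f=\mathrm{id}$ by a second-order Taylor bound, disposes of the drift $A^u$ uniformly in $t$ by the elementary estimate $\sup_t|A^u_t|\le C_b T\sqrt u$, and then invokes a ready-made martingale FCLT (Jacod--Shiryaev or Ethier--Kurtz) once the bracket $\langle M^u\rangle_t$ is seen to converge to $LL^\top t$ by dominated convergence. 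This is shorter and more conceptual; in particular it bypasses the explicit tightness estimate and the Doob integral representation entirely. The price is that the argument is no longer self-contained: the heavy lifting is delegated to the cited FCLT, whereas the paper's proof uses only elementary tools. Your localization via $\mathbb{P}(\Omega_u)\to1$ and $\mathbb{P}(X_{[0,uT]}\subset B)\to1$ is exactly what is needed to bridge the gap between the locally-stated Assumption~\ref{assumption} and the global hypotheses of the cited theorem, and the logical order (first establish the identity case, hence tightness of $Y^{\mathrm{id},u}$, then use that tightness to control the Taylor remainder) is sound.
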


\medskip

We remark at this point that Assumption \ref{assumption} is satisfied for weak solutions of stochastic differential equations (SDEs) under minimal regularity assumptions on the coefficients.

\begin{rmk}\label{rmk:SDE}
Let $X$ be a weak solution of the $m$-dimensional SDE
\begin{equation*}
\begin{split}
\dd X_t^j &= b_j(t,X_t)\dd t + \sum_{k=1}^{d}\sigma_{jk}(t,X_t)\dd B^k_t,\quad t\geq 0,\quad j\in\{0,\dotsc,m\},\\
X_0 &= x_0\text{ a.s.},
\end{split}
\end{equation*}
where $B$ is a standard $d$-dimensional Brownian motion, $x_0\in\setR^m$, $b:[0,T]\times\setR^m\rightarrow\setR^m$ is uniformly bounded in a neighborhood of $(0,x_0)$ and $\sigma :[0,T]\times \setR^m\rightarrow\setR^{m\times d}$ is continuous in $(0,x_0)$. Then, $X$ satisfies Assumption \ref{assumption} and, hence, Theorems \ref{stCLT} and \ref{fCLT} apply. 
\end{rmk}

\medskip

We also note that, if $X$ satisfies Assumption \ref{assumption} and the matrix $L$ is non-singular, then the price of an at-the-money digital option in \eqref{dig_price} (that is, when $K=x_0$) converges to $\frac{1}{2}$ in the limit $\delta\searrow0$. This result can be significantly sharpened, when $X$ is given by a weak solution of an SDE of the following type. 

\begin{thm}\label{hoe_cont_thm}
Suppose that the process $X$ solves the stochastic differential equation
\begin{align}
\dd X_t &= b(t,\cdot)\dd t+\sigma(t)\dd B_t, \label{sde2}\\
X_0&=x_0, \notag
\end{align}
where $b:\,[0,\infty)\times\Omega\rightarrow\rr^m$ is a bounded predictable process, $\sigma:\,[0,\infty)\rightarrow\rr^{m\times m}$ is a locally square integrable function taking values in the set of invertible matrices such that the smallest eigenvalue of $\sigma(\cdot)^\top\sigma(\cdot)$ is uniformly bounded away from $0$ and $B$ is a standard $m$-dimensional Brownian motion. Then, the bounds
\begin{equation}\label{eq:P bounds}
e^{f_1(t)}\leq \pp\big(X^1_t>X^1_0\big)\leq e^{f_2(t)}, \quad t>0
\end{equation}
apply. Here, the functions $f_1$, $f_2$ are given by
\begin{align*}
  f_1(t)&= -\left(1+\sqrt{\frac{\|\sigma^{-1}b\|_{2,\infty}^2\,t}{2\log 2}}\right)
  \cdot\left(\log 2+\sqrt{\frac{\|\sigma^{-1}b\|_{2,\infty}^2\,t\,\log 2}{2}}\right), \\
  f_2(t)&= -\left(\sqrt{\frac{2\log 2}{\|\sigma^{-1}b\|_{2,\infty}^2\,t}}-1\right)
  \cdot\left(\sqrt{\frac{\|\sigma^{-1}b\|_{2,\infty}^2\,t\,\log 2}{2}}-\frac{1}{2}\,\|\sigma^{-1}b\|_{2,\infty}^2\,t\right),
\end{align*}
where $\|\sigma^{-1}b\|_{2,\infty}=\sup_{t,\omega}|\sigma^{-1}(t)b(t,\omega)|_2$. Moreover, in the limit $t\searrow0$, the functions $e^{f_1}$, $e^{f_2}$ admit the series expansions
\begin{eqnarray}
e^{f_1(t)}&=&\frac{1}{2}- \sqrt{\frac{\log 2}{2}} \|\sigma^{-1}b\|_{2,\infty}\, t^{1/2} +O(t), \\
e^{f_2(t)}&=&\frac{1}{2}+ \sqrt{\frac{\log 2}{2}} \|\sigma^{-1}b\|_{2,\infty}\, t^{1/2} +O(t).
\end{eqnarray}
\end{thm}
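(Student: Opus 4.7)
The plan is to combine a Girsanov change of measure with Hölder's inequality: under a drift-removing probability measure $\qq$, the event $A:=\{X_t^1>X_0^1\}$ has probability exactly $\tfrac{1}{2}$ by Gaussian symmetry, and Hölder transfers this equality to the two-sided bounds on $\pp(A)$.

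Concretely, I set $\theta(s,\omega):=\sigma^{-1}(s)b(s,\omega)$, which is bounded by $C:=\|\sigma^{-1}b\|_{2,\infty}$, so Novikov's criterion is trivially satisfied and the stochastic exponential
\[Z_t:=\exp\Bigl(-\int_0^t \theta(s)\cdot \dd B_s-\tfrac{1}{2}\int_0^t |\theta(s)|^2\,\dd s\Bigr)\]
is a genuine $\pp$-martingale. Defining $\qq$ via $\dd\qq/\dd\pp|_{\mcal{F}_t}=Z_t$, Girsanov's theorem gives that $\widetilde B_s:=B_s+\int_0^s\theta(u)\,\dd u$ is a $\qq$-Brownian motion and hence $X_s-X_0=\int_0^s \sigma(u)\,\dd\widetilde B_u$ is under $\qq$ a centered continuous Gaussian martingale with strictly positive variance in every coordinate, by the uniform ellipticity of $\sigma^\top\sigma$. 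The first coordinate $X_t^1-X_0^1$ is therefore a centered nondegenerate Gaussian under $\qq$, yielding $\qq(A)=\tfrac{1}{2}$.

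For the upper bound in \eqref{eq:P bounds}, I apply Hölder's inequality with conjugate exponents $(p,q)$, $p>1$:
\[\pp(A)=\ev^\qq[Z_t^{-1}\mathbf{1}_A]\leq \bigl(\ev^\qq[Z_t^{-p}]\bigr)^{1/p}\,\qq(A)^{1/q}.\]
Rewriting $Z_t^{-p}$ as the stochastic exponential of $p\theta\cdot\widetilde B$ multiplied by $\exp\bigl(\tfrac{p^2-p}{2}\int_0^t|\theta(s)|^2\,\dd s\bigr)$, and invoking $|\theta|\leq C$ together with the fact that the stochastic exponential is a true $\qq$-martingale (again by Novikov), yields $\ev^\qq[Z_t^{-p}]\leq\exp\bigl(\tfrac{p^2-p}{2}C^2 t\bigr)$. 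Hence
\[\log\pp(A)\leq \tfrac{(p-1)C^2 t}{2}-\tfrac{p-1}{p}\log 2,\]
and a one-variable optimization (the optimum occurs at $p=\sqrt{2\log 2/(C^2 t)}$) yields exactly $\log\pp(A)\leq f_2(t)$.

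For the lower bound I swap the roles of $\pp$ and $\qq$:
\[\tfrac{1}{2}=\qq(A)=\ev^\pp[Z_t\mathbf{1}_A]\leq\bigl(\ev^\pp[Z_t^p]\bigr)^{1/p}\,\pp(A)^{1/q},\]
and the same manipulation produces $\ev^\pp[Z_t^p]\leq\exp\bigl(\tfrac{p^2-p}{2}C^2 t\bigr)$. Solving for $\pp(A)$ and optimizing over $p>1$ (here the optimum is at $p=1+\sqrt{2\log 2/(C^2 t)}$) produces $\log\pp(A)\geq f_1(t)$. The short-time expansions then follow routinely: the optimum values simplify to $f_2(t)=-\log 2+\sqrt{2 C^2 t \log 2}-\tfrac{1}{2}C^2 t$ and $f_1(t)=-\log 2-\sqrt{2 C^2 t\log 2}-\tfrac{1}{2}C^2 t$, and a Taylor expansion of the exponential gives the claimed leading-order behavior with $O(t)$ remainder (the $\bigl(\sqrt{2C^2 t\log 2}\bigr)^2$ contribution is absorbed into this remainder). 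The main obstacle is purely algebraic bookkeeping to match the optimized expressions to the precise factored forms of $f_1,f_2$ in the statement; all probabilistic inputs (Novikov, Girsanov, martingality, and Gaussian symmetry) are immediate from the standing boundedness and ellipticity assumptions.
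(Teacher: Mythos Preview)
Your proof is correct and follows essentially the same route as the paper: a Girsanov change of measure removing the drift, the observation that $\qq(A)=\tfrac12$ by symmetry, H\"older's inequality to pass between $\pp$ and $\qq$, the exponential-martingale moment bound $\ev[Z_t^\alpha]\leq\exp\bigl(\tfrac{\alpha(\alpha-1)}{2}C^2t\bigr)$, and optimization over the H\"older exponent. The only cosmetic difference is in the lower bound: the paper writes $\qq(A)=\ev^\qq[\mathbf{1}_A Z_t^{1/p}Z_t^{-1/p}]$ and applies H\"older under $\qq$, whereas you apply H\"older directly to $\qq(A)=\ev^\pp[Z_t\mathbf{1}_A]$ under $\pp$; these two inequalities are identical after the relabeling $p\leftrightarrow q$, which is why your optimal exponent $p=1+\sqrt{2\log 2/(C^2t)}$ and the paper's $p=1+\sqrt{C^2t/(2\log 2)}$ both yield the same value $f_1(t)=-(1+\sqrt{C^2t/(2\log 2)})^2\log 2$.
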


\medskip

The rest of the paper is structured as follows. In section \ref{sec_cont}, we give the proofs of Theorems \ref{stCLT}, \ref{hoe_cont_thm} and \ref{fCLT} in this order. In addition, we provide examples of continuous semimartingales, for which the limit in \eqref{dig_price} is not $\frac{1}{2}$ and, therefore, the Central Limit Theorem (Theorem \ref{stCLT}) cannot hold with a non-degenerate Gaussian law in the limit. In section \ref{sec_disc}, we state and prove extensions of Theorems \ref{stCLT}, \ref{hoe_cont_thm} and \ref{fCLT} to semimartingales with jumps. Finally, in section \ref{sec_appl}, we explain the consequences of these results for the prices of at-the-money digital options with short maturities and the small time asymptotics of implied volatility skews.   

\section{Continuous Semimartingales} \label{sec_cont}

We start with the proof of Theorem \ref{stCLT}.

\bigskip

\noindent\textit{Proof of Theorem \ref{stCLT}.} Let~$f$ be as in the statement of the theorem and let~$N_f$ be an ${\mathcal N}(0,V)$ random vector on some probability space $(\widetilde{\Omega},\widetilde{{\mathcal A}}, \widetilde{\pp})$. We need to show
\begin{equation}\label{eq:p1}
\lim_{t\searrow 0}\ev\Bigl[g\Bigl( \frac{f(X_t)-f(x_0)}{\sqrt{t}}\Bigr)\Bigr] = \ev_{\tilde \pp}\bigl[g\bigl(N_f\bigr)\bigr],\quad g\in C_\text{b}(\setR^n,\setR).
\end{equation}
To this end, we fix a function $g\in C_\text{b}(\setR^n,\setR)$, choose an open ball $\boldsymbol{B}$ such that $\overline{\boldsymbol{B}}\subset U$,
and define the hitting time $\overline\tau:=\tau_{\overline{\boldsymbol{B}}^\mathrm{c}}$. Then with
\begin{equation}\label{tau}
\tau:=\overline\tau\wedge\tau_A\wedge\tau_M,
\end{equation}
we have
\begin{eqnarray*}
&&\Big|\ev\Big[g\Big(\frac{f(X_t)-f(x_0)}{\sqrt{t}}\Big)\Big]-\ev_{\tilde\pp}[g(N_f)]\Big|\\
&&\quad\;\;\;\leq\Big|\ev\Big[g\Big(\frac{f(X_t)-f(x_0)}{\sqrt{t}}\Big)-g\Big(\frac{f(X_{t\wedge\tau})-f(x_0)}{\sqrt{t}}\Big)\Big]\Big|\\
&&\quad\quad\;\;\;\;\;\;+\Big|\ev\Big[g\Big(\frac{f(X_{t\wedge\tau})-f(x_0)}{\sqrt{t}}\Big)\Big] -\ev_{\tilde \pp}[g(N_f)]\Big|.
\end{eqnarray*}
Hence in order to show \eqref{eq:p1}, it is sufficent to prove that the two summands in the latter upper bound tend to zero as $t\searrow 0$. Since the event $\{\tau=0\}$ has probability zero, the first summand converges to zero by the Dominated Convergence Theorem. Moreover, the convergence of the second summand to zero will follow, if we can show 
\begin{equation}\label{eq:p2}
\frac{f(X_{t\wedge\tau})-f(x_0)}{\sqrt{t}}\xrightarrow{d} N_f, \qquad t\searrow0.
\end{equation}

\medskip

In order to prove \eqref{eq:p2}, we first note that Doob's Integral Representation Theorem (see e.g.\ Theorem~18.12 on page~358 of~\cite{Ka02}) in combination with part (\emph{\ref{covariation}}) of Assumption \ref{assumption} implies the existence of an $m$-dimensional Brownian motion $B$ (possibly on an extension of the primary probability space) such that a.s.
\begin{equation}\label{DoobIntRep}
M_{t\wedge\tau }^j=\sum_{k=1}^m\int_0^{t\wedge\tau}\sigma^{jk}_s\dd B_s^k,\quad t\in[0,T],\quad j\in\{1,\dots ,m\}.
\end{equation}
By part (\emph{\ref{compensator}}) of Assumption \ref{assumption} and \eqref{DoobIntRep} we therefore have a.s.
\begin{equation}\label{anotherequ}
  X^j_{t\wedge\tau}=x_0+\int_0^{t\wedge\tau}b^j_s
    \dd s+\sum_{k=1}^m\int_0^{t\wedge\tau}\sigma^{jk}_s\dd B_s^k,
    \quad t\in[0,T],\; j\in\{1\dots ,m\}.
\end{equation}
In addition, we recall that, by the Cram\'{e}r--Wold Theorem, \eqref{eq:p2} holds iff for every $s=(s_1,\dotsc,s_n)^{\top}\in\setR^n$
\begin{equation}\label{eq:cw}
\sum_{j=1}^{n}s_j\,\frac{f_j(X_{t\wedge\tau})-f_j(x_0)}{\sqrt{t}} \xrightarrow{d} \sum_{j=1}^{n}s_j\,N_f^j
\end{equation}
as $t\searrow 0$. To show this, we fix $s=(s_1,\dotsc,s_n)^{\top}\in\setR^n$. Applying the local It\^o formula (see e.g.\ Corollary~17.19 on page~341 of~\cite{Ka02}) in combination with \eqref{anotherequ}, we have with $\Psi_t=(\psi^{jk}_t)_{1\leq j,k\leq m}:=\sigma_t\sigma_t^{\top}$ for all $j\in\{1,\dotsc,n\}$:
\begin{equation*}
f_j(X_{t\wedge\tau})-f_j(x_0) = \int_{0}^{t\wedge\tau}(\mathcal L_s f_j)(X_s)\dd s 
+\sum_{k,l=1}^{m}\int_0^{t\wedge\tau}\frac{\partial f_j}{\partial x_l}(X_{s})\,\sigma^{lk}_s\dd B^k_s,\quad  t\geq0,
\end{equation*}
where 
\begin{equation*}
(\mathcal L_s f_j)(u) = \frac{1}{2}\sum_{k,l=1}^{m}\psi_s^{kl}\,\frac{\partial^2 f_j}{\partial x_k\partial x_l}(u)
+\sum_{k=1}^{m}b^k_s\,\frac{\partial f_j}{\partial x_k} (u),\quad u\in U,\quad s\in[0,\tau ].
\end{equation*}
Thus, we have for $t>0$:
\begin{equation}\label{eq:itof}
\begin{split}
\sum_{j=1}^{n} s_j\,\frac{f_j(X_{t\wedge\tau})-f_j(x_0)}{\sqrt{t}} 
&= \frac{1}{\sqrt{t}}\sum_{j=1}^{n}s_j\int_{0}^{t\wedge\tau}({\mathcal L}_s f_j)(X_s)\dd s\\ 
&\mspace{15mu}+\frac{1}{\sqrt{t}}\sum_{j=1}^{n}s_j\sum_{k,l=1}^{m}\int_0^{t\wedge\tau}\frac{\partial f_j}{\partial x_l}(X_{s})\,\sigma_s^{lk}\dd B^k_s.
\end{split}
\end{equation}
By parts (\emph{\ref{boundA}}) and (\emph{\ref{boundS}}) of Assumption \ref{assumption} and the choice of $\boldsymbol{B}$ there exists a random variable $C<\infty$ a.s.\ such that $\sup_{u\in \boldsymbol{B}}|(\mathcal L_s f_j)(u)|\leq C$ a.s.\ for $s\in[0,\tau]$, $j\in\{1,\dotsc,n\}$. Thus, we have 
\begin{equation}\label{anotherequation}
\biggl|\frac{1}{\sqrt{t}}\sum_{j=1}^{m}s_j\int_{0}^{t\wedge\tau}({\mathcal L}_s f_j)(X_s)\dd s\biggr|\leq C\,\sqrt{t}\,\sum_{j=1}^{m}s_j\to 0,\quad t\searrow0\quad\text{a.s.}
\end{equation}
Before examining the second term on the right-hand side of \eqref{eq:itof} we observe that, for every $t\in [0,T]$, the random vector
\begin{equation*}
N_t:=\begin{pmatrix}N_t^1\\\vdots \\N_t^n\end{pmatrix}:
=\begin{pmatrix}
\frac{1}{\sqrt{t}}\sum_{k,l=1}^{m}\frac{\partial f_1}{\partial x_l}(x_0)L_{lk} B_{t}^k\\
\vdots\\
\frac{1}{\sqrt{t}}\sum_{k,l=1}^{m}\frac{\partial f_n}{\partial x_l}(x_0)L_{lk} B_{t}^k
\end{pmatrix}
\end{equation*}
is ${\mathcal N}(0,V)$ distributed. In particular, the distribution of $N_{t}$ is independent of $t$, and $N_t\overset{d}{=}N_f$ for every $t>0$.
With $\xi_{jk}:=\sum_{l=1}^m\frac{\partial f_j}{\partial x_l}(x_0)L_{lk}$ we have for all $h\in C_\text{b}(\setR,\setR)$:
\begin{align*}
&\biggl|\ev\Bigl[h\Bigl(\frac{1}{\sqrt{t}}\sum_{j=1}^{n}s_j\sum_{k=1}^{m}\xi_{jk} B_{t\wedge\tau}^k\Bigr)\Bigr]-\ev_{\tilde \pp}\Bigl[h\Bigl(\sum_{j=1}^{n}s_jN_f^j\Bigr)\Bigr]\biggr|\\
&\mspace{20mu}=\biggl|\ev\Bigl[h\Bigl(\frac{1}{\sqrt{t}}\sum_{j=1}^{n}s_j\sum_{k=1}^{m}\xi_{jk} B_{t\wedge\tau}^k\Bigr)\Bigr] -\ev\Bigl[h\Bigl(\frac{1} {\sqrt{t}}\sum_{j=1}^{n}s_j\sum_{k=1}^{m}\xi_{jk} B_{t}^k\Bigr)\Bigr]\biggr|\\
&\mspace{20mu}\leq \biggl|\ev\Bigl[\Bigl(h\Bigl(\frac{1}{\sqrt{t}}\sum_{j=1}^{n}s_j\sum_{k=1}^{m}\xi_{jk} B_{t\wedge\tau}^k\Bigr)- h\Bigl(\frac{1} {\sqrt{t}}\sum_{j=1}^{n}s_j\sum_{k=1}^{m}\xi_{jk} B_{t}^k\Bigr)\Bigr)\textbf{1}_{\{\tau>t\}}\biggr]\biggr|\\
&\mspace{35mu}+\biggl|\ev\Bigl[\Bigl(h\Bigl(\frac{1}{\sqrt{t}}\sum_{j=1}^{n}s_j\sum_{k=1}^{m}\xi_{jk} B_{t\wedge\tau}^k\Bigr)- h\Bigl(\frac{1} {\sqrt{t}}\sum_{j=1}^{n}s_j\sum_{k=1}^{m}\xi_{jk}B_{t}^k\Bigr)\Bigr)\textbf{1}_{\{\tau\leq t\}}\biggr]\biggr|\\
&\mspace{20mu}\leq 2\|h\|_{\infty}\,\pp(\tau\leq t)\to 0
\end{align*}
as $t\searrow 0$. Therefore, the random variables
\begin{equation}\label{eq:frozen}
\frac{1}{\sqrt{t}}\sum_{j=1}^{n}s_j\sum_{k,l=1}^{m}\int_0^{t\wedge\tau}\frac{\partial f_j}{\partial x_l}(x_0)L_{lk}\dd B_s^k
= \frac{1}{\sqrt{t}}\sum_{j=1}^{n}s_j\sum_{k,l=1}^{m}\frac{\partial f_j}{\partial x_l}(x_0)L_{lk}\, B_{t\wedge\tau}^k
\end{equation}
converge in distribution to $\sum_{j=1}^{n}s_jN_f^j$ as $t\searrow0$. Next, we show that the difference between~\eqref{eq:frozen} and the second term
on the right-hand side of~\eqref{eq:itof} converges to zero in~$L^2$. By the Cauchy--Schwarz inequality and It\^o's isometry we have
\begin{align*}
&\ev\Biggl[\biggl(\sum_{j=1}^{n}s_j\sum_{k,l=1}^{m}\frac{1}{\sqrt{t}}\int_0^{t\wedge\tau}\Bigl(\frac{\partial f_j}{\partial x_l}(X_{s}) \sigma_s^{lk}-\frac{\partial f_j}{\partial x_l}(x_0)L_{lk}\Bigr)\dd B^k_s\biggr)^2\Biggr]\\
&\mspace{10mu}\leq n\,m^2\sum_{j=1}^{n}s_j^2\sum_{k,l=1}^{m}\ev\Biggl[\frac{1}{t}\biggl(\int_0^{t\wedge\tau}\Bigl(\frac{\partial f_j}{\partial x_l}(X_{s}) \sigma_s^{lk} -\frac{\partial f_j}{\partial x_l}(x_0)L_{lk}\Bigr)\dd B^k_s\biggr)^2\Biggr]\\
&\mspace{10mu}=n\,m^2\sum_{j=1}^{n}s_j^2\sum_{k,l=1}^{m}\ev\Biggl[\frac{1}{t}\int_0^{t\wedge\tau}\Bigl(\frac{\partial f_j}{\partial x_l}(X_{s}) \sigma_s^{lk}-\frac{\partial f_j}{\partial x_l}(x_0)L_{lk}\Bigr)^2\dd s\Biggr]\\
&\mspace{10mu}\leq n\,m^2\sum_{j=1}^{n}s_j^2\sum_{k,l=1}^{m}\ev \biggl[ \frac{t\wedge\tau}{t}\max_{s\in[0,t\wedge\tau]}\Bigl(\frac{\partial f_j}{\partial x_l}(X_{s}) \sigma_s^{lk}-\frac{\partial f_j}{\partial x_l}(x_0)L_{lk}\Bigr)^2\biggr],
\end{align*}
which indeed converges to zero as $t\searrow 0$ by the Dominated Convergence Theorem. The just established $L^2$ convergence implies convergence in distribution. Summarizing, we have in the limit $t\searrow 0$:
\begin{align*}
&\biggl|\frac{1}{\sqrt{t}}\sum_{j=1}^{n}s_j\int_{0}^{t\wedge\tau}({\mathcal L}_s f_j)(X_s)\dd s\biggr| \to 0\ \ \text{ a.s.},\\
&\sum_{j=1}^{n}s_j\sum_{k,l=1}^{m}\frac{1}{\sqrt{t}}\int_0^{t\wedge\tau}\frac{\partial}{\partial x_l}f_j(X_{s}) \sigma_s^{lk}\dd B_s^k \xrightarrow{d} \sum_{j=1}^{n}s_jN_f^j,
\end{align*}
so that by Slutsky's theorem \eqref{eq:cw} readily follows. \ep

\begin{rmk}
Recall that, if a family of probability measures satisfies a large deviations principle (LDP) with a rate function $I$, then the validity of a CLT is related to the second derivative of $I$ (see section 1.4 in \cite{deHo00} for a discussion in the case of Cram\'er's theorem). We now (heuristically) outline this connection in a very simple instance of our setup. Suppose that $X$ satisfies a one-dimensional SDE (with zero drift for simplicity)
  \begin{equation*}
    X_t = x_0 + \int_0^t \sigma(X_s)\dd B_s,\quad t\geq 0,
  \end{equation*}
where $\sigma$ is bounded, bounded away from zero and Lipschitz continuous. Then, due to the time-change formalism for one-dimensional diffusions (see e.g.\ Theorem~8.5.1
on page~148 in~\cite{Ok10}), for each $\delta>0$, we can view the random variable $X_\delta$ as the value of the diffusion
  \begin{equation*}
  X^{(\delta)}_t = x_0 + \sqrt{\delta} \int_0^t \sigma(X^{(\delta)}_s)\dd W_s 
  \end{equation*}
at time $1$, where $W$ is the appropriate standard Brownian motion. Now, using the remark following Theorem 5.6.7 in~\cite{DeZe10} on page 214, and the contraction principle (see Theorem 4.2.1 on page 126 of \cite{DeZe10}), we conclude that the random variables $X_t$ satisfy an LDP as $t\searrow0$ with rate function
  \begin{equation*}
  I(x_0+\varepsilon)=\frac{1}{2}\,\inf_{\substack{f\in H^1([0,1]):\\f(0)=x_0,\\f(1)=x_0+\varepsilon}} 
  \int_0^1 \frac{\dot{f}(s)^2}{\sigma(f(s))^2}\dd s. 
 \end{equation*}
Next, let $\Sigma$ be an antiderivative of the function $1/\sigma$. By the assumptions on $\sigma$, the function $\Sigma(f(\cdot))$ belongs to $H^1([0,1])$ if and only if the function $f$ belongs to $H^1([0,1])$. Hence, the latter infimum can be rewritten as
  \begin{equation*}
  \inf_{\substack{v\in H^1([0,1]):\\v(0)=\Sigma(x_0),\\v(1)=\Sigma(x_0+\varepsilon)}} \int_0^1 \dot{v}(s)^2\dd s.
  \end{equation*}
Due to Jensen's inequality, the infimum is reached when $v$ is the affine function connecting $\Sigma(x_0)$ and $\Sigma(x_0+\varepsilon)$. Plugging it in, we end up with  
  \begin{equation}\label{Ibnd}
  I(x_0+\varepsilon)=\frac{1}{2}[\Sigma(x_0+\varepsilon)-\Sigma(x_0)]^2=\frac{1}{2}\left(\int_{x_0}^{x_0+\varepsilon} 
  \frac{\ddno u}{\sigma(u)}\right)^2.
  \end{equation}
That is, for $\varepsilon>0$ small and fixed, we have the asymptotics
  \begin{equation}\label{eq:ldp}
  \pp(X_t \geq x_0 + \varepsilon) \simeq  \exp(-I(x_0+\varepsilon)/t),
  \end{equation}
where $\simeq$ stands for exponential equivalence. Now, \emph{pretend} that we can apply the LDP \eqref{eq:ldp} with a time-dependent $\varepsilon$ defined by $\varepsilon=z\sqrt{t}$, where $z>0$. Since $I(x_0)=I'(x_0)=0$, we have
  \[
    I(x_0 + z\sqrt{t}) = \tfrac12 I''(x_0) z^2 t + o(t),
  \]
and so
  \[
  \pp\left(\frac{X_t-x_0}{\sqrt{t}} \geq z \right) \simeq \exp\left(-\frac{z^2}{2 \sigma(x_0)^2} + o(1)\right).
  \]
The Gaussian limit law is thus correctly identified by this heuristic argument (the case $z<0$ is similar).
\end{rmk}

\medskip

If a semimartingale $X$ satisfies Assumption \ref{assumption}, and the limit law in Theorem \ref{stCLT} is non-degenerate, we clearly have
\begin{equation}\label{anylimit}
\lim_{t\searrow 0} \, \pp(X_t>x_0)=\frac{1}{2}.
\end{equation}
We now give some examples where the value of this limit is not $1/2$.
\begin{exm}
Let us consider the squared Brownian motion $B^2$ in one dimension (no confusion with our superindex convention should arise). Then clearly $\lim_{t\searrow 0}\pp(B_t^2>0)=1$, which does not contradict Theorem \ref{stCLT}. Indeed, the martingale part in the canonical decomposition of $B^2$ is $B_t^2-t=2\int_0^t B_s\dd B_s$, which leads to 
\eq
\langle B_t^2-t\rangle=4\int_0^t B_s^2\dd s \rightarrow 0,\quad t\searrow0 \quad \text{a.s.}
\en
Since all items of Assumption \ref{assumption} are satisfied, Theorem \ref{stCLT} tells us that $\frac{1}{\sqrt{t}}B_t^2$ converges in distribution to a \emph{degenerate} normal random variable.
\end{exm}
\begin{exm}
Denoting by $\Phi$ the standard normal cumulative distribution function, we see that for any $p\in(0,1)$ and a standard Brownian motion $B$, the continuous process $X_t = B_t + \Phi^{-1}(p) \sqrt{t}$ satisfies $\pp(X_t > 0)=p$ for all $t\geq0$.
(Although not related to the present topic, we recall that the process $B_t=W_t+\sqrt{t}$ occurs in Example 3.4 of Delbaen and Schachermayer \cite{DeSc95}. They show that, when used as the price process of a financial security, $X_t$ (and also $\exp(X_t)$) allows for immediate arbitrage; the arbitrage disappears if proportional transaction costs are introduced \cite[Example~4.1]{Gu06}.)
\end{exm}
The following example shows that each probability $p\in[0,1)$ can even be realized by a continuous \emph{martingale}. (Note also that that the \emph{non-continuous} martingale $t-P_t$, where $P_t$ is a Poisson process with parameter $1$, satisfies $\lim_{t\searrow 0} \pp(t-P_t>0)=1$.)

\begin{exm}\label{exm:Bessel}
Consider the squared Bessel process of dimension $\delta\geq0$, that is, the strong solution of the SDE
\begin{equation*}
\dd R^\delta_t=2\sqrt{R^\delta_t}\dd B_t+\delta \dd t
\end{equation*}
with initial value $R^\delta_0=0$. Then, the process $R^\delta_t-\delta t$, $t\geq0$ is a martingale. We claim that
\[
  \text{for all}\ p\in[0,1),\ \text{there is a}\ \delta \in [0,\infty)\
  \text{such that}\ \lim_{t\searrow 0}\,\pp(R^\delta_t-\delta t>0)=p.
\]
The scaling property of squared Bessel processes (see section 1 in chapter XI of \cite{ReYo99}) shows 
\begin{equation*}
\lim_{t\searrow 0} \, \pp(R^\delta_t-\delta t>0)=\pp(R^\delta_1>\delta).
\end{equation*}
We show now that when one varies $\delta$ in $[0,\infty)$, the right-hand side achieves all values $p\in[0,1)$. For $\delta>0$, the random variable $R^\delta_1$ has the gamma distribution with shape parameter $\delta/2$ and scale parameter $2$ (see Corollary 1.4 in section 1 of chapter XI in \cite{ReYo99}). In particular, it has mean $\delta$ and variance $2\delta$. We claim that
\begin{equation}\label{delta_claim}
\lim_{\delta\searrow 0} \, \pp(R^\delta_1>\delta)=0.
\end{equation}
Let $\varepsilon>0$ be arbitrary. By Chebyshev's inequality, we have
\[
  \pp(R^\delta_1 > \delta + \varepsilon) \leq \frac{2\delta}{\varepsilon^2},
\]
and so $\lim_{\delta\searrow 0} \pp(R^\delta_1 > \delta + \varepsilon)=0$. Therefore, recalling that, for any fixed $0<\delta\leq 2$, the density function of $R^\delta_1$ is strictly decreasing, we have the estimates
\begin{align*}
\lim_{\delta\searrow0} \, \pp(R^\delta_1>\delta) & =\lim_{\delta\searrow 0} \, \pp(\delta+\varepsilon>R^\delta_1>\delta) \\
&\leq \varepsilon\lim_{\delta\searrow 0} \, 2^{-\delta/2}\Gamma(\delta/2)^{-1}\delta^{\delta/2-1}e^{-\delta/2}\\
&= \varepsilon\lim_{\delta\searrow0} \, \frac{\delta^{\delta/2-1}}{\Gamma(1+\delta/2)/(\delta/2)}=\frac{\varepsilon}{2}.
\end{align*}
Thus, taking the limit $\varepsilon\searrow 0$, we end up with \eqref{delta_claim}. For $\delta\to\infty$, the random variables $(R_1^\delta-\delta)/(2\delta)^{1/2}$ converge in distribution to a standard normal random variable \cite[p.~340]{JoKoBa94}. This implies
\[
\lim_{\delta\to\infty} \pp(R^\delta_1>\delta)=\frac12.
\]
It now follows from the Intermediate Value Theorem that, for every $p\in[0,\tfrac12)$, we can find a $\delta\geq0$ such that
\begin{equation*}
\lim_{t\searrow 0} \, \pp(R^\delta_t-\delta t>0)=p.
\end{equation*}
(Note that for $\delta=0$, we have $R^\delta_t \equiv 0$, and so $\lim_{t\searrow 0} \, \pp(R^\delta_t-\delta t>0)=0$.) Finally, by considering the martingales $\delta t - R_t^\delta$, we see that all values $p\in[0,1)$ can be achieved.  
\end{exm}
We now take a look at higher order terms beyond the limit in \eqref{anylimit}. If $X_t=B_t+bt$ is a one-dimensional Brownian motion with drift $b\in\rr$, we have $\pp(X_t>x_0)=\tfrac{1}{2}+O(t^{1/2})$. Theorem \ref{hoe_cont_thm}, which we prove now, shows that this
estimate persists for a larger class of It{\^o} processes.
\bigskip

\noindent\textit{Proof of Theorem \ref{hoe_cont_thm}.} Fix a $t>0$ and make a change of probability measure according to the Girsanov Theorem, with the corresponding density being given by
\begin{align*}
\frac{\ddno\qq}{\ddno\pp}:=Z_t^{-1}&:=e^{-N_t-\frac{1}{2}\langle N\rangle_t}\\
&:=\exp\Big(-\int_0^t \sigma(s)^{-1}b(s,\cdot)\dd B_s-\frac{1}{2}\int_0^t |\sigma(s)^{-1}b(s,\cdot)|_2^2\dd s\Big).
\end{align*}
Under $\qq$, the process $X$ solves the equation
\eq
\mathrm{d}X_s=\sigma(s)\dd B^\qq_s
\en 
on $[0,t]$ with initial condition $X_0=x_0$ and where $B^\qq$ is a standard Brownian motion under $\qq$. Thus, $\qq(X^1_s>X^1_0)=\frac{1}{2}$ for all $s\in(0,t]$. Moreover, 
\eq
\pp(X^1_t>X^1_0)=\ev^{\qq}\big[Z_t\,\mathbf{1}_{\{X^1_t>X^1_0\}}\big]. 
\en
To obtain upper and lower bounds on the latter expression, we fix numbers $p,q>1$ such that $p^{-1}+q^{-1}=1$ and apply H\"older's inequality to deduce
\begin{align*}
  \qq\big(X^1_t>X^1_0\big) &= \ev^{\qq}\big[ \mathbf{1}_{\{X^1_t>X^1_0\}}
    Z_t^{1/p} Z_t^{-1/p} \big] \\
  &\leq \ev^{\qq}\big[\mathbf{1}_{\{X^1_t>X^1_0\}} Z_t \big]^{1/p} \, \ev^{\qq}\big[ Z_t^{-q/p} \big]^{1/q}.
\end{align*}
Taking the $p$-th power and rearranging, we get
\begin{align*}
\qq\big(X^1_t>X^1_0\big)^p\,\ev^\qq\big[Z_t^{-q/p}\big]^{-p/q}
&\leq \ev^{\qq}\big[\mathbf{1}_{\{X^1_t>X^1_0\}} Z_t\big]\\
&\leq \qq\big(X^1_t>X^1_0\big)^{1/q}\,\ev^\qq\big[Z_t^p\big]^{1/p},
\end{align*}
where the last upper bound follows again by H\"older's inequality. This can be simplified to
\[
\Big(\frac{1}{2}\Big)^p\,\ev^\qq\big[Z_t^{-q/p}\big]^{-p/q}\leq\pp(X^1_t>X^1_0)\leq\Big(\frac{1}{2}\Big)^{1/q}\,\ev^\qq\big[Z_t^p\big]^{1/p},
\]
or
\eq\label{exp_bnds}
\Big(\frac{1}{2}\Big)^p\,\ev^\pp\big[Z_t^{-q/p-1}\big]^{-p/q} \leq \pp(X^1_t>X^1_0)
\leq\Big(\frac{1}{2}\Big)^{1/q}\,\ev^\pp\big[Z_t^{p-1}\big]^{1/p}.
\en
To estimate the bounds further, we note that
\begin{align}
Z_t^{-q/p-1}
&=e^{-(q/p+1)N_t-\tfrac12(q/p+1)^2\langle N\rangle_t}\cdot
  \exp\Big(\frac{1}{2}\Big(\frac{q}{p}+1\Big)\frac{q}{p}\,\langle N\rangle_t\Big) \notag \\
&\leq e^{-(q/p+1)N_t-\tfrac12(q/p+1)^2\langle N\rangle_t}\cdot
  \exp\Big(\frac{1}{2}\Big(\frac{q}{p}+1\Big)\frac{q}{p}\,t\,\|\sigma^{-1}b\|^2_{2,\infty}\Big)
  \label{eq:low mart bd}
\end{align}
and
\begin{align}
Z_t^{p-1}
&=e^{(p-1)N_t-\tfrac12(p-1)^2\langle N\rangle_t}\cdot
  \exp\big(\tfrac12(p-1)p\,\langle N\rangle_t\big) \notag \\
&\leq e^{(p-1)N_t-\tfrac12(p-1)^2\langle N\rangle_t}\cdot
  \exp\big(\tfrac12(p-1)p\,t\,\|\sigma^{-1}b\|^2_{2,\infty}\big).
   \label{eq:up mart bd}
\end{align}
(Recall that we write $\|\sigma^{-1}b\|_{2,\infty}$ for $\sup_{t,x}|\sigma^{-1}b(t,x)|_2$.) The first factors in \eqref{eq:low mart bd} resp. \eqref{eq:up mart bd} are $\pp$-martingales, since Novikov's condition is satisfied by our assumptions on $b$ and $\sigma$. Therefore, inserting these estimates into \eqref{exp_bnds}, we obtain
\begin{align*}
\sup_{\substack{p^{-1}+q^{-1}=1 \\ p>1}} 
\Big(\frac{1}{2}\Big)^p\,&\exp\Big(-\frac{1}{2}\Big(\frac{q}{p}+1\Big)\,t\,\|\sigma^{-1}b\|^2_{2,\infty}\Big) \\
&\leq \pp(X^1_t>X^1_0) \\
&\leq \inf_{\substack{p^{-1}+q^{-1}=1 \\ p>1}}   
\Big(\frac{1}{2}\Big)^{1/q}\,\exp\big(\tfrac12(p-1)\,t\,\|\sigma^{-1}b\|^2_{2,\infty}\big).
\end{align*}
It is easy to see that the lower bound is maximized by
\[
  p = 1 + \sqrt{\frac{\|\sigma^{-1}b\|^2_{2,\infty}\,t}{2\log 2}}\,,
\]
whereas the upper bound is minimized by
\[
  p= \sqrt{\frac{2\log 2}{\|\sigma^{-1}b\|^2_{2,\infty}\,t}},
\]
which together give \eqref{eq:P bounds}. Finally, the expansions given in the statement of the theorem can be computed by Taylor expansions of the explicit functions in the lower and upper bounds. \ep

\bigskip

We conclude this section with the proof of Theorem \ref{fCLT}. 

\bigskip

\noindent\textit{Proof of Theorem \ref{fCLT}.}
Let $\widetilde{B}$ be a Brownian motion with variance-covariance matrix $V$ and let $(u_l)_{l\in\setN}$ be a sequence with elements in $(0,1)$ such that $u_l\searrow 0$ as $l\rightarrow\infty$. It is sufficient to verify the
convergence of the finite-dimensional distributions
\begin{equation} \label{confindist}
(Y_{t_1}^{f,u_l},\dots ,Y_{t_w}^{f,u_l})\xrightarrow{d} (\widetilde{B}_{t_1},\dots ,\widetilde{B}_{t_w}),\quad t_1,\dots ,t_w\in [0,T],\quad  w\in\setN,
\end{equation}
and the tightness condition
\begin{equation}\label{othercrit}
\lim_{\delta\searrow 0} \, \varlimsup_{l\rightarrow\infty}\pp\Big(\sup_{|s-t|\leq \delta}|Y_s^{f,u_l}-Y_t^{f,u_l}|>\varepsilon\Big)=0,\quad \varepsilon >0.
\end{equation}
Indeed, by Theorem 1.3.2 in \cite{SV06}, condition \eqref{othercrit} implies the tightness of the laws of $Y^{f,u_l}$, $l\in\setN$. Moreover, the convergence \eqref{confindist} allows to the identify the limit points with the law of $\widetilde{B}$. 

\bigskip

First, we focus on \eqref{confindist}. Fix $t_1,\dots,t_w\in [0,T]$ for some $w\in\nn$; then by the Cram\'er--Wold theorem it suffices to show
\begin{equation*}
\sum_{d=1}^w\sum_{j=1}^ns_{dj}(Y^{f,u_l}_{t_d})^j\xrightarrow{d}\sum_{d=1}^w\sum_{j=1}^ns_{dj}\widetilde{B}^j_{t_d}
\end{equation*}
for all $s\in\setR^{w\times n}$ as $l\rightarrow\infty$. Let $\tau$ be defined as in \eqref{tau}. Arguing as in the proof of Theorem~\ref{stCLT}, we see that it is enough to show
\begin{equation*}
\sum_{d=1}^w\sum_{j=1}^ns_{dj}(Y^{f,u_l}_{t_d\wedge\tau})^j\xrightarrow{d}\sum_{d=1}^w\sum_{j=1}^ns_{dj}\widetilde{B}^j_{t_d}
\end{equation*}
as $l\rightarrow\infty$. However, this can be proven analogously to \eqref{eq:cw}.

\bigskip

To show \eqref{othercrit}, note that we may work with the stopped processes $Y_{t\wedge\tau}^{f,u_l}$, $l\in\nn$. Indeed, since $\tau$, as defined in \eqref{tau}, is a.s. positive, we have
\[
  \varlimsup_{l\rightarrow\infty}\pp\Big(\sup_{t\in[0,T]} |Y_t^{f,u_l}-Y_{t\wedge\tau}^{f,u_l}|>\varepsilon)=0,\quad \varepsilon>0. 
\]
The triangle inequality thus shows that \eqref{othercrit} is implied by
\begin{equation}\label{othercritnew}
\lim_{\delta\searrow 0}\,\varlimsup_{l\rightarrow\infty}\pp\biggl(\sup_{|s-t|\leq \delta}|(Y_{s\wedge\tau}^{f,u_l})^j-(Y_{t\wedge\tau}^{f,u_l})^j|>\varepsilon\biggr)=0,\quad \varepsilon>0,\, j\in\{1,\dots ,n\}.
\end{equation}
By \eqref{eq:itof} we get
\begin{multline} \label{glg}
\pp\biggl(\sup_{|s-t|\leq \delta}|(Y_{s\wedge\tau}^{f,u_l})^j-(Y_{t\wedge\tau}^{f,u_l})^j|>\varepsilon\biggr) \leq  \pp\biggl(\sup_{|s-t|\leq\delta}\frac{1}{\sqrt{u_l}}\int_{u_l(s\wedge\tau)}^{u_l(t\wedge\tau)}|({\mathcal L}_r f_j)(X_r)|\dd r >\frac{\varepsilon}{2}\biggr)\\
+  \pp\biggl(\sup_{|s-t|\leq\delta}\bigg|\sum_{k,v=1}^{m} \int_{u_l(s\wedge\tau)}^{u_l(t\wedge\tau)}\frac{\partial f_j}{\partial x_v}(X_r)\sigma_r^{vk}\dd B_r^k\bigg| >\frac{\varepsilon\sqrt{u_l}}{2}\biggr).
\end{multline}
According to \eqref{anotherequation}, we have
\begin{equation*}
\pp\biggl(\sup_{|s-t|<\delta}\frac{1}{\sqrt{u_l}}\int_{u_l(s\wedge\tau)}^{u_l(t\wedge\tau)}|({\mathcal L}_r f_j)(X_r)|\dd r >\frac{\varepsilon}{2}\biggr)\leq \pp\biggl(C\sqrt{u_l}\,\delta >\frac{\varepsilon}{2}\biggr)\xrightarrow{l\rightarrow\infty} 0,\quad \varepsilon >0.
\end{equation*}
We now investigate the second term on the right-hand side of \eqref{glg}. After fixing $\delta$, $j$ and $l$, we define the process 
\[
  F_t:=\sum_{k,v=1}^{m}\,\int_0^{u_l(t\wedge\tau)}\frac{\partial f_j}{\partial x_v}(X_r)\sigma_r^{vk}\dd B_r^k,\quad t\in[0,T]. 
\]
In addition, we introduce the processes
\begin{equation*}
G^{i}_t:= F_{i\delta+t} - F_{i\delta}, \quad t\in I_i:=[0,\delta],\quad i\in\{0,\dots ,\lfloor T/\delta\rfloor-1\},
\end{equation*}
and for $i=\lfloor T/\delta\rfloor$,
\begin{equation*}
G^{\lfloor T/\delta\rfloor}_t:= F_{\lfloor T/\delta\rfloor\delta+t} - F_{\lfloor T/\delta\rfloor\delta},
\quad t\in I_{\lfloor T/\delta\rfloor}:=[0,T-\lfloor T/\delta\rfloor].
\end{equation*}
These are continuous local martingales and, thus, each of them can be represented as a time changed Brownian motion (see e.g.\ Theorem~18.4 on page~352 of~\cite{Ka02}): $G^{i}_t=W_{\langle G^i \rangle_t}^i$. Moreover, the quadratic variation of $G^{i}$ can be bounded according to
\[
  \langle G^{i}\rangle_t \leq \gamma\,C_\sigma^2\,u_l\,\delta ,\quad t\in I_i,\quad i\in \{0,\dots,\lfloor T/\delta\rfloor\},
\]
where $0<\gamma<\infty$ only depends on $m$ and the Jacobian of $f$ on the ball $\overline{\boldsymbol{B}}$ (see the paragraph preceeding \eqref{tau} for the definition of the latter). Now, consider the event $\{\sup_{|t-s|<\delta}|F_t-F_s|>\frac{\varepsilon\sqrt{u_l}}{2}\}$. Clearly, on this event there exist $s_0,\,t_0\in[0,T]$ such that $|s_0-t_0|\leq\delta$ and $|F_{t_0}-F_{s_0}|>\frac{\varepsilon\sqrt{u_l}}{2}$. Without loss of generality we may assume that $0\leq s_0<\delta\leq t_0<2\delta$ (the other cases can be dealt with in the same manner). Then, either $|F_\delta-F_{s_0}|>\frac{\varepsilon\sqrt{u_l}}{4}$, or $|F_{t_0}-F_\delta|>\frac{\varepsilon\sqrt{u_l}}{4}$. In the first case we get
\eq
\frac{\varepsilon\sqrt{u_l}}{4}<|F_\delta-F_{s_0}| \leq |F_{s_0}-F_0|+|F_\delta-F_0|\leq 2\,\sup_{r\in[0,\delta]} |F_r-F_0|.
\en
In the second case we have
\eq
\frac{\varepsilon\sqrt{u_l}}{4}<|F_{t_0}-F_\delta|\leq\sup_{r\in[0,\delta]}|F_{\delta+r}-F_\delta|.
\en
These considerations show that on the event $\{\sup_{|t-s|<\delta}|F_t-F_s|>\frac{\varepsilon\sqrt{u_l}}{2}\}$ there exists an index $i\in\{0,\dots \lfloor T/\delta\rfloor\}$ such that $\sup_{t\in I_i}|G_t^i|>\frac{\varepsilon\sqrt{u_l}}{8}$.
Putting everything together we obtain
\begin{eqnarray*}
&&\pp\biggl(\sup_{|s-t|<\delta}\bigg|\sum_{k,v=1}^{m}\, \int_{u_l(s\wedge\tau)}^{u_l(t\wedge\tau)}\frac{\partial f_j}{\partial x_v}(X_r)\sigma_r^{vk}\dd B_r^k\bigg| >\frac{\varepsilon\sqrt{u_l}}{2}\biggr)\\
&&\quad\leq \pp\biggl(\sup_{t\in I_i}|G^i_t|>\frac{\varepsilon\sqrt{u_l}}{8}\text{ for at least one }i\biggr) \\
&&\quad\leq \sum_{i=0}^{\lfloor T/\delta\rfloor}\pp\biggl(\sup_{t\in I_i}|G^i_t|>\frac{\varepsilon\sqrt{u_l}}{8}\biggr) \\
&&\quad\leq\Bigl(\frac{T}{\delta}+1\Bigr)
\pp\biggl(\sup_{0\leq r\leq \gamma\,C_\sigma^2\,u_l\,\delta}|W_r^i|>\frac{\varepsilon\sqrt{u_l}}{8}\biggr) \\
&&\quad\leq \Bigl(\frac{T}{\delta}+1\Bigr) \exp\biggl(-\frac{\varepsilon^2}{128\,\gamma\,C_\sigma^2\,\delta}\biggr)\xrightarrow{\delta\rightarrow 0}0.
\end{eqnarray*} 
Here, the last estimate follows from Bernstein's inequality
(see e.g.\ Exercise~3.16 on page~153 of~\cite{ReYo99}). We have
established~\eqref{othercritnew} and, thus, the proof is finished.

\ep


\section{Semimartingales with Jumps} \label{sec_disc}

This section is devoted to the extensions of Theorems \ref{stCLT}, \ref{fCLT} and \ref{hoe_cont_thm} to semimartingales with jumps. We start by stating the assumptions on the semimartingale $X$, which will replace Assumption \ref{assumption} when jumps are present. 

\begin{asmp}\label{assjumps}
For a $T>0$, let $X=(X_t^1,\dots ,X_t^m)^\top_{t\in [0,T]}$ be an $\setR^m$-valued semimartingale with decomposition $X=X^c+J$, such that
\begin{enumerate}
\item $X^c$ is a continuous semimartingale satisfying Assumption \ref{assumption};
\item the process $J$ is given by
\[
\quad\quad\quad J_t=\int_0^t\int_{\textbf{B}_1}\psi(s,z)\, \bigl(\Pi (\ddno s,\ddno z)
    -\mu (\ddno s,\ddno z)\bigr)
    +\int_0^t\int_{\setR^m\setminus \textbf{B}_1}\varphi (s,z)\, \Pi (\ddno s,\ddno z),
\]
where $\textbf{B}_1$ denotes the unit ball in $\setR^m$, $\Pi$ is a Poisson random measure on $[0,T]\times\setR^m$ with compensator $\mu$; the $\setR^m$-valued processes $\psi$, $\varphi$ are predictable with respect to the filtration generated by $\Pi$ and
\begin{equation*}
\ev\biggl[\int_0^T\int_{\textbf{B}_1}|\psi (s,z)|^2\, \mu (\ddno s,\ddno z)\biggr]<\infty;
\end{equation*}
\item There exists an a.s. positive stopping time $\tau_J$ such that
\[
\ev\Big[\big|\Pi-\mu\big|\big([0,t\wedge\tau_J]\times\textbf{B}_1\big)\Big]=o(t^{1/2})\quad\text{as}\quad t\searrow0.
\]
\end{enumerate} 
\end{asmp}
We can now formulate the analogue of Theorem \ref{stCLT} in the case of semimartingales with jumps. 

\begin{thm}[Central Limit Theorem with jumps]\label{CLTj}
Let $X$ satisfy Assumption \ref{assjumps}. Then for every $f:\setR^m\rightarrow\setR^n$ such that there exists an open neighborhood $U$ of $x_0$ with $f\in C^2(U,\setR^n)$, we have
\begin{equation*}
\frac{1}{\sqrt{t}}(f(X_t)-f(x_0))\xrightarrow{d}N_f\quad \text{ as }t\searrow 0,
\end{equation*}
where $N_f$ is a normal random vector with mean 0 and covariance matrix
\begin{equation*}
V=(Df)(x_0)L(Df(x_0)L)^\top.
\end{equation*}
\end{thm}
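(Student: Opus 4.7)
The plan is to reduce Theorem~\ref{CLTj} to its continuous counterpart, Theorem~\ref{stCLT}, by showing that the jump part $J$ of $X$ contributes only an $o_p(\sqrt{t})$ perturbation. Since Assumption~\ref{assjumps}(1) ensures that $X^c$ satisfies Assumption~\ref{assumption} with $X^c_0 = x_0$, Theorem~\ref{stCLT} applied to $X^c$ immediately yields
\begin{equation*}
\frac{f(X^c_t)-f(x_0)}{\sqrt t}\xrightarrow{d} N_f\quad\text{as }t\searrow 0.
\end{equation*}
By Slutsky's theorem, the theorem will follow once we establish that $(f(X_t)-f(X^c_t))/\sqrt t\xrightarrow{p} 0$.

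The first step is a localization analogous to the one in the proof of Theorem~\ref{stCLT}. Fix an open ball $\boldsymbol B$ with $\overline{\boldsymbol B}\subset U$ and introduce the stopping time $\tau:=\overline\tau\wedge\tau_A\wedge\tau_M\wedge\tau_J$, where $\overline\tau$ is the first exit of $X^c$ from $\boldsymbol B$. Since $\pp(\tau=0)=0$, it suffices to argue on the event $\{\tau>t\}$. On that event $X^c_t\in\boldsymbol B$, and a first-order Taylor expansion of $f$ at $X^c_t$ (valid once $X_t\in U$, which will follow a~posteriori from the smallness of $J_t$) gives
\begin{equation*}
|f(X_t)-f(X^c_t)|\leq C\,|J_t|,\qquad C:=\sup_{u\in\overline{\boldsymbol B}}\|Df(u)\|<\infty.
\end{equation*}
The task therefore reduces to proving $|J_{t\wedge\tau}|/\sqrt t\xrightarrow{p} 0$.

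To finish, decompose $J=J^1+J^2$ into the compensated small-jump part (integral against $\Pi-\mu$ over $\mathbf B_1$) and the large-jump part (integral against $\Pi$ over $\setR^m\setminus\mathbf B_1$). Since $J$ is a semimartingale, $\mu([0,T]\times(\setR^m\setminus\mathbf B_1))<\infty$ a.s., hence $\pp(J^2_{t\wedge\tau}\neq 0)\to 0$ as $t\searrow 0$ and therefore $J^2_{t\wedge\tau}/\sqrt t\xrightarrow{p} 0$. For $J^1$, after a further localization rendering $\psi$ bounded on $[0,\tau]\times\mathbf B_1$ by a deterministic constant, Assumption~\ref{assjumps}(3) yields
\begin{equation*}
\ev\bigl[|J^1_{t\wedge\tau}|\bigr]\leq \|\psi\|_\infty\,\ev\bigl[|\Pi-\mu|([0,t\wedge\tau]\times\mathbf B_1)\bigr]=o(\sqrt t),
\end{equation*}
so $J^1_{t\wedge\tau}/\sqrt t\to 0$ in $L^1$ and hence in probability. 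Combining the two pieces and applying Slutsky's theorem finishes the argument.

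The principal obstacle is the treatment of the small-jump part: Assumption~\ref{assjumps}(3) is tailor-made to guarantee exactly the $o(\sqrt t)$ bound required, but to exploit it one must arrange, via a further localization interleaved with $\tau_J$, that the kernel $\psi$ can be regarded as bounded, so that the total-variation estimate on $\Pi-\mu$ translates into a bound on $J^1$ itself. The large-jump contribution, by contrast, follows from the finite activity of $\Pi$ outside $\mathbf B_1$ and requires no delicate estimation.
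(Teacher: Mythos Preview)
Your reduction strategy---show $|J_{t\wedge\tau}|/\sqrt t\to 0$ in probability and then invoke Theorem~\ref{stCLT} for $X^c$ via a Taylor bound---is different from the paper's, which applies It\^o's formula to $f(X_{t\wedge\tau})$ directly and estimates the resulting jump integrals of the form $f_j(X_{s-}+\psi(s,z))-f_j(X_{s-})$. The key advantage of the paper's route is that these increments of~$f$ are automatically bounded once both $X_{s-}$ and $X_{s-}+\psi(s,z)$ lie in the ball $\overline{\boldsymbol B}_r(x_0)$: one uses $\|f|_{\overline{\boldsymbol B}_r(x_0)}\|_\infty$, never $\|\psi\|_\infty$.

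This is precisely where your argument has a gap. The step ``after a further localization rendering $\psi$ bounded on $[0,\tau]\times\mathbf B_1$ by a deterministic constant'' is not justified by Assumption~\ref{assjumps}: the only control on~$\psi$ is the $L^2(\mu)$-integrability in part~(2), and since $\psi(s,z)$ depends on the mark~$z$, no stopping-time localization in~$s$ can force a uniform bound in~$z$. Without such a bound you cannot pass from the total-variation estimate on $\Pi-\mu$ to a bound on $\int\psi\,\ddno(\Pi-\mu)$. In fact the intermediate claim $|J_{t\wedge\tau}|/\sqrt t\to 0$ need not hold under Assumption~\ref{assjumps} as written: if $\psi(s,z)$ blows up as $s\searrow 0$ (while still satisfying the $L^2$ and total-variation hypotheses), the compensator contribution $-\int_0^t\!\int_{\mathbf B_1}\psi\,\ddno\mu$ can be of larger order than $\sqrt t$. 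The paper sidesteps this entirely by splitting according to whether $|\psi(s,z)|<r/2$ or $\geq r/2$ \emph{inside the $f$-increment}; on the first piece the increment is bounded by $2\|f|_{\overline{\boldsymbol B}_r(x_0)}\|_\infty$ and Assumption~\ref{assjumps}(3) applies, while the second piece contains only finitely many jumps.
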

\medskip

\noindent\textit{Proof.} Let $r>0$ be such that the closed ball $\overline{\boldsymbol{B}}_r(x_0)$ with radius $r$ around $x_0$ is contained in $U$. Further, we denote by $\overline{\boldsymbol{B}}_{r/2}(x_0)$ the closed ball with radius $r/2$ around $x_0$ and define the hitting time $\overline{\tau}:=\tau_{\overline{\boldsymbol{B}}_{r/2}(x_0)^c}$. Finally, we introduce the stopping time
\begin{equation} \label{justoti}
\tau:=\overline{\tau}\wedge\tau_A\wedge\tau_M\wedge\tau_J
\end{equation}
and notice that $\tau$ a.s. positive. Therefore, by the same argument as in the proof of Theorem \ref{stCLT}, it suffices to show
\begin{equation*}
\frac{1}{\sqrt{t}}(f(X_{t\wedge\tau})-f(x_0))\xrightarrow{d}N_f \quad \text{as} \quad t\searrow 0.
\end{equation*} 

By It\^{o}'s formula in the form of Proposition 8.19 in \cite{CT04}, we have for all $j\in\{1,\dots,n\}$ and $t\in[0,T]$:
\begin{align}\label{rhs}
f_j(X_{t\wedge\tau})&-f_j(x_0) = \int_{0}^{t\wedge\tau}(\mathcal L_s f_j)(X_s)\dd s 
+\sum_{k,l=1}^{m}\int_0^{t\wedge\tau}\frac{\partial f_j}{\partial x_l}(X_s) \sigma^{lk}_s\dd B^k_s\\
\label{thirdterm}
&+\int_0^{t\wedge\tau}\int_{\boldsymbol{B}_1}\bigl(f_j(X_{s-}+\psi (s,z))-f_j(X_{s-})\bigr)\,\bigl (\Pi(\ddno s,\ddno z)-\mu(\ddno s,\ddno z)\bigr)\\
\label{fourthterm}
&+\int_0^{t\wedge\tau}\int_{\setR^m\setminus\boldsymbol{B}_1}\bigl(f_j(X_{s-}+\varphi (s,z))-f_j(X_{s-})\bigr)\,\Pi (\ddno s,\ddno z).
\end{align}
Arguing as in the proof of Theorem \ref{stCLT}, we see that the vector of terms on the right-hand side of \eqref{rhs}, rescaled by $\frac{1}{\sqrt{t}}$, converges in distribution to $N_f$ as $t\searrow 0$. Thus, the theorem will follow if we can show that the terms \eqref{thirdterm} and \eqref{fourthterm}, rescaled by $\frac{1}{\sqrt{t}}$, converge to zero in probability as $t\searrow 0$.

\bigskip

The term \eqref{thirdterm}, rescaled by $\frac{1}{\sqrt{t}}$, can be decomposed into a sum $T_t^1+T_t^2$ of the following two terms:
\begin{align*}
\frac{1}{\sqrt{t}}\int_0^{t\wedge\tau}\int_{\boldsymbol{B}_1}
\bigl(f_j(X_{s-}+\psi (s,z))-f_j(X_{s-})\bigr)\,\mathbf{1}_{\{|\psi(s,z)|<r/2\}}\,\bigl (\Pi(\ddno s,\ddno z)-\mu(\ddno s,\ddno z)\bigr),\\
\frac{1}{\sqrt{t}}\int_0^{t\wedge\tau}\int_{\boldsymbol{B}_1}
\bigl(f_j(X_{s-}+\psi (s,z))-f_j(X_{s-})\bigr)\,\mathbf{1}_{\{|\psi(s,z)|\geq r/2\}}\,\bigl (\Pi(\ddno s,\ddno z)-\mu(\ddno s,\ddno z)\bigr).
\end{align*}
Then:
\[
\ev\bigl[|T_t^1|\bigr]
\leq\frac{2\|f|_{\overline{\boldsymbol{B}}_r(x_0)}\|_\infty}{\sqrt{t}}\,\ev\Big[\big|\Pi-\mu\big|\big([0,t\wedge\tau_J]\times\textbf{B}_1\big)\Big],
\]
which converges to zero as $t\searrow 0$ by part (3) of Assumption \ref{assjumps}. Moreover, since $J$ a.s. has only finitely many jumps of absolute size greater than $r/2$ on every finite time interval, $T_t^2$ converges to $0$ a.s. as $t\searrow 0$.

\bigskip

Lastly, the term \eqref{fourthterm}, rescaled by $\frac{1}{\sqrt{t}}$, converges to zero a.s. as $t\searrow 0$, since $J$ a.s. has only finitely many jumps of absolute size greater than $1$ on every finite time interval. \ep

\bigskip

As in the case of continuous semimartingales, the Central Limit Theorem can be strengthened to a Functional Central Limit Theorem, which in the presence of jumps reads as follows.

\begin{thm}[Functional Central Limit Theorem with jumps]
Let $X$ satisfy Assumption \ref{assjumps}. Then for every $f:\setR^m\rightarrow\setR^n$ such that there exists an open neighborhood $U$ of $x_0$ with $f\in C^2(U,\setR^n)$, the processes
\begin{equation*}
Y^{f,u}:=\biggl(\frac{f(X_{ut})-f(x_0)}{\sqrt{u}}\biggr)_{t\in[0,T]},\quad u\in(0,1),
\end{equation*}
converge in law to a Brownian motion with variance-covariance matrix given by
\begin{equation*}
V=(Df)(x_0)L(Df(x_0)L)^\top
\end{equation*}
as $u\searrow 0$.
\end{thm}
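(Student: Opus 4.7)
The proof follows the pattern of Theorem \ref{fCLT}: we reduce the claim to (i) convergence of finite-dimensional distributions along any sequence $u_l\searrow 0$, and (ii) tightness of the laws of $Y^{f,u_l}$. Fix $r>0$ small enough that $\overline{\boldsymbol{B}}_r(x_0)\subset U$ and work with processes stopped at $\tau=\overline{\tau}\wedge\tau_A\wedge\tau_M\wedge\tau_J$ from \eqref{justoti}; since $\tau$ is a.s.\ positive, $\pp(u_lT<\tau)\to 1$, and this localization does not affect the limit.

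For (i), the Cram\'er--Wold theorem reduces matters to linear combinations of coordinates of $Y^{f,u_l}_{t_d\wedge\tau}$. Applying It\^o's formula as in \eqref{rhs}--\eqref{fourthterm} and scaling by $1/\sqrt{u_l}$, the continuous drift and Brownian stochastic integral terms are handled exactly as in the proofs of Theorems \ref{stCLT} and \ref{fCLT}. For the jump terms we use the splitting from Theorem \ref{CLTj}: the piece of \eqref{thirdterm} supported on $\{|\psi|<r/2\}$ has $L^1$-norm, after rescaling, at most $2\|f|_{\overline{\boldsymbol{B}}_r(x_0)}\|_\infty\,u_l^{-1/2}\,\ev\bigl[|\Pi-\mu|([0,u_lT\wedge\tau_J]\times\boldsymbol{B}_1)\bigr]=o(1)$ by part (3) of Assumption \ref{assjumps}; the piece on $\{|\psi|\geq r/2\}$, as well as \eqref{fourthterm}, vanishes a.s.\ on the event that $X$ has no jump of size $\geq r/2$ in $[0,u_lT\wedge\tau]$, which has probability tending to one since $J$ has only finitely many such jumps on $[0,\tau]$.

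For (ii), since the limit process is continuous, tightness in the Skorokhod space follows from $C$-tightness of the ``continuous part'' $Y^{f,u_l,c}$ of $Y^{f,u_l}$ (the process obtained from the drift and Brownian integral in \eqref{rhs} rescaled by $1/\sqrt{u_l}$) together with $\sup_{t\in[0,T]}|Y^{f,u_l}_t-Y^{f,u_l,c}_t|\to 0$ in probability. The first item is the content of the proof of Theorem \ref{fCLT}, applied verbatim, concluding with the Bernstein-inequality estimate. For the second, the $\{|\psi|<r/2\}$ compensated integral in \eqref{thirdterm}, rescaled by $1/\sqrt{u_l}$, is pathwise dominated by $2\|f|_{\overline{\boldsymbol{B}}_r(x_0)}\|_\infty\,u_l^{-1/2}\,|\Pi-\mu|([0,u_lT\wedge\tau_J]\times\boldsymbol{B}_1)$; taking expectations and applying Markov's inequality together with part (3) of Assumption \ref{assjumps} gives $\sup_t|\cdot|\to 0$ in probability. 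The $\{|\psi|\geq r/2\}$ piece and \eqref{fourthterm} vanish uniformly on the same high-probability event as in (i).

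The main obstacle is controlling the compensated small-jump integral uniformly in $t\in[0,T]$, not merely pointwise. The sharp rate $o(t^{1/2})$ in part (3) of Assumption \ref{assjumps} is crucial: after division by $\sqrt{u_l}$ only $o(1)$ remains, so the jump contribution disappears in the limit while the Gaussian contribution from the continuous part survives. The pathwise domination by the total-variation measure $|\Pi-\mu|$ automatically upgrades the pointwise estimate to a uniform one, bypassing any need for finer martingale maximal inequalities.
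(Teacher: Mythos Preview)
Your proof is correct and follows the same overall architecture as the paper: reduce to the continuous part via Theorem~\ref{fCLT}, and show that the rescaled jump contribution vanishes uniformly in $t$ in probability. The finite-dimensional and large-jump arguments are identical to the paper's.

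There is one minor methodological difference worth noting. For the uniform control of the small-jump compensated integral $(J^{f,u,1})^j$, the paper first observes that it is a square-integrable martingale and applies Doob's $L^1$ maximal inequality, then bounds the expectation of its terminal value via $|\Pi-\mu|$ and part~(3) of Assumption~\ref{assjumps}. You instead bound the process pathwise by $2\|f|_{\overline{\boldsymbol{B}}_r(x_0)}\|_\infty\,|\Pi-\mu|([0,u_lT\wedge\tau_J]\times\boldsymbol{B}_1)$ and apply Markov's inequality directly. Your route is slightly more elementary and, as you correctly point out, automatically upgrades the pointwise estimate to a uniform one without any martingale maximal inequality; it relies on the fact that the integrand is bounded and that Assumption~\ref{assjumps}(3) already postulates integrability of the total variation, so the compensated integral can be read as a pathwise Lebesgue--Stieltjes integral. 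Both arguments arrive at the same $o(1)$ estimate.
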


\medskip

\noindent\textit{Proof.} For each $f$ and $u$ as in the statement of the theorem, we write $Q^{f,u}$ for the law of the process $Y^{f,u}$ on $D([0,T],\setR^n)$, the space of right-continuous functions on $[0,T]$ having left limits; moreover, we denote by $Q^{f,u}_c$ the law of the continuous part of $Y^{f,u}$ on $C([0,T],\setR^n)$. We claim first that the family $(Q^{f,u})_{u\in(0,1)}$ is tight on $D([0,T],\setR^n)$ if and only if the family $(Q^{f,u}_c)_{u\in(0,1)}$ is tight on $C([0,T],\setR^n)$ and, moreover, that the limit points of the two families are the same. 

\medskip

To prove the claim, it suffices to show that for every $\varepsilon>0$ and $j\in\{1,\dots,n\}$:
\begin{equation}\label{eqj}
\pp\Bigl(\sup_{t\in[0,T]}|(J^{f,u}_t)^j|>\varepsilon\Bigr)\rightarrow 0\quad\text{as}\quad u\searrow 0,
\end{equation}
where $J^{f,u}$ denotes the jump part of $Y^{f,u}$. Indeed, if this is the case, then every converging subsequence of $(Q^{f,u})_{u\in(0,1)}$ in $D([0,T],\setR^n)$ corresponds to a converging subsequence of $(Q^{f,u}_c)_{u\in(0,1)}$ in $C([0,T],\setR^n)$ and the limits of the two subsequences have to coincide. Now, since the stopping time defined in \eqref{justoti} is a.s. positive, \eqref{eqj} is implied by
\begin{equation}\label{jumpeqtau}
\pp\Bigl(\sup_{t\in[0,T]}|(J^{f,u}_t)^j|>\varepsilon,\;\tau>uT\Bigr)\rightarrow 0 \quad\text{as}\quad u\searrow 0.
\end{equation}
Furthermore, by It\^{o}'s formula in the form of Proposition 8.19 in \cite{CT04}, we have on the event $\{\tau>uT\}$:
\begin{align}\label{jeqa}
(J^{f,u}_t)^j=& \frac{1}{\sqrt{u}}\int_0^{ut} \int_{\boldsymbol{B}_1}\bigl(f_j(X_{s-}+\psi (s,z))-f_j(X_{s-})\bigr)
\, \bigl(\Pi(\ddno s,\ddno z)-\mu(\ddno s,\ddno z)\bigr)\\
\label{jeqb}&+\frac{1}{\sqrt{u}}\int_0^{ut} \int_{\setR^m\setminus\boldsymbol{B}_1}\bigl(f_j(X_{s-}+\varphi (s,z))-f_j(X_{s-}\bigr)\bigr)
\, \Pi (\ddno s,\ddno z).
\end{align}
As in the proof of Theorem \ref{CLTj}, we decompose the integral on the right-hand side of \eqref{jeqa} according to whether $|\psi(s,z)|<r/2$, or $|\psi(s,z)|\geq r/2$, and call the two resulting processes $(J^{f,u,1})^j$ and $(J^{f,u,2})^j$, respectively. Since the process $(J^{f,u,1})^j$ is obtained by integrating a predictable process with respect to a compensated Poisson random measure, it is a square-integrable martingale. Thus, by Doob's maximal inequality, we have
\begin{multline*}
\pp\Bigl(\sup_{t\in [0,T]}|(J^{f,u,1}_t)^j|>\varepsilon/2,\;\tau>uT\Bigr) \\
\leq \frac{2}{\varepsilon\sqrt{u}}\ev\Big[\Big|\int_0^{(uT)\wedge\tau} \int_{\boldsymbol{B}_1}
\bigl(f_j(X_{s-}+\psi(s,z))-f_j(X_{s-})\bigr)\mathbf{1}_{\{|\psi(s,z)|<r/2\}}\overline{\Pi}(\ddno s,\ddno z)\Big|\Big],
\end{multline*}
where we wrote $\overline{\Pi}$ for $\Pi-\mu$. Moreover, the same argument as in the proof of Theorem \ref{CLTj} shows that the latter upper bound tends to zero as $u\searrow 0$ (by virtue of part (3) of Assumption \ref{assjumps}). Finally, since a.s. the process $J^{f,u}$ has finitely many jumps of size greater than $r/2$ on every finite time interval, the random variables $\sup_{t\in [0,T]}|(J^{f,u,2}_t)^j|$ converge to zero a.s. as $u\searrow 0$. In addition, by the same reasoning, the supremum over $t\in[0,T]$ of \eqref{jeqb} tends to zero a.s. as $u\searrow 0$ as well. Putting everything together, we end up with \eqref{jumpeqtau}, finishing the proof of the claim.

\bigskip

Lastly, one can proceed as in the proof of Theorem \ref{fCLT} to first show the tightness of the family $(Q^{f,u}_c)_{u\in(0,1)}$ on $C([0,T],\setR^n)$ and to subsequently identify each of its limit points with the law of a Brownian motion with variance-covariance matrix $V$. In view of the claim above, this finishes the proof. \ep

\bigskip

We conclude this section by stating and proving the analogue of Theorem \ref{hoe_cont_thm} in the presence of jumps. 

\begin{thm}\label{hoe_jumps_thm}
Suppose that the process $X$ solves the stochastic differential equation
\begin{align}
\mathrm{d}X_t&=b(t,\cdot)\,\mathrm{d}t+\sigma(t)\,\mathrm{d}B_t+\int_{\rr^m} \psi(t,y)\,\Pi(\mathrm{d}t,\mathrm{d}y), \label{sde_drift}\\
X_0&=x_0, \notag
\end{align}
where $b:\,[0,\infty)\times\Omega\rightarrow\rr^m$ is a bounded predictable process with respect to the filtration of the standard $m$-dimensional Brownian motion $B$, $\sigma:\,[0,\infty)\rightarrow\rr^{m\times m}$ is a locally square integrable function taking values in the set of invertible matrices such that the smallest eigenvalue of $\sigma(\cdot)^\top\sigma(\cdot)$ is uniformly bounded away from $0$ and $\psi$ is a predictable process with respect to the filtration of the Poisson random measure $\Pi$. 

Suppose further that $\Pi$ is symmetric with respect to $y$ (so that, in particular, its compensator vanishes) and that $\psi_1(t,y)=-\psi_1(t,y)$ for all $t\geq0$ and $y\in\rr^m$ with probability $1$. Then, the bounds
\eq
e^{f_1(t)}\leq \pp\big(X^1_t>X^1_0\big)\leq e^{f_2(t)}, \quad t>0
\en 
of Theorem \ref{hoe_cont_thm} apply with the same functions $f_1$, $f_2$ as there.
\end{thm}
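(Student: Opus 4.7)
The strategy is to adapt the Girsanov-based argument from the proof of Theorem \ref{hoe_cont_thm}. The key point is that the density
\begin{equation*}
Z_t = \exp\Big(\int_0^t \sigma(s)^{-1}b(s,\cdot)\dd B_s + \tfrac{1}{2}\int_0^t |\sigma(s)^{-1}b(s,\cdot)|_2^2\dd s\Big)
\end{equation*}
depends only on $B$, so defining $\qq$ by $\frac{\dd\qq}{\dd\pp} = Z_t^{-1}$ leaves $\Pi$ a Poisson random measure under $\qq$ with the same intensity (since $B$ and $\Pi$ are independent and $b$ is predictable with respect to the Brownian filtration), while $B^\qq_s := B_s + \int_0^s \sigma(u)^{-1}b(u,\cdot)\dd u$ is a $\qq$-Brownian motion independent of $\Pi$. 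Under $\qq$, equation \eqref{sde_drift} becomes $\dd X_s = \sigma(s)\dd B^\qq_s + \int_{\rr^m}\psi(s,y)\,\Pi(\dd s,\dd y)$ with initial condition $X_0=x_0$, and Novikov's condition still holds by the boundedness of $\sigma^{-1}b$.

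The heart of the argument is to show that $\qq(X^1_t > X^1_0) = \tfrac{1}{2}$. Writing
\begin{equation*}
X^1_t - X^1_0 = M_t + J_t,\quad M_t := \sum_{k=1}^m \int_0^t \sigma_{1k}(s)\dd B^{\qq,k}_s,\quad J_t := \int_0^t\int_{\rr^m}\psi_1(s,y)\,\Pi(\dd s,\dd y),
\end{equation*}
one observes that under $\qq$, $M_t$ and $J_t$ are independent, and $M_t$ is a centered Gaussian with strictly positive variance by the ellipticity hypothesis on $\sigma^\top\sigma$. Using the symmetry of $\Pi$ in $y$ together with the oddness condition on $\psi_1$ (which must be read as $\psi_1(t,-y)=-\psi_1(t,y)$), the map $y\mapsto -y$ preserves the law of $\Pi$ while negating the integrand $\psi_1$, so $J_t \overset{d}{=} -J_t$ under $\qq$. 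Being the sum of two independent symmetric random variables, one of which has a continuous distribution, $X^1_t - X^1_0$ is itself symmetric about zero and has no atom at zero, yielding $\qq(X^1_t > X^1_0) = \qq(X^1_t < X^1_0) = \tfrac{1}{2}$.

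With this established, the remainder of the proof is identical to the continuous case: from $\pp(X^1_t > X^1_0) = \ev^\qq[Z_t\,\mathbf{1}_{\{X^1_t > X^1_0\}}]$, two applications of H\"older's inequality with conjugate exponents $p,q$ yield
\begin{equation*}
\Big(\tfrac{1}{2}\Big)^p\,\ev^\pp[Z_t^{-q/p-1}]^{-p/q} \leq \pp(X^1_t > X^1_0) \leq \Big(\tfrac{1}{2}\Big)^{1/q}\,\ev^\pp[Z_t^{p-1}]^{1/p},
\end{equation*}
after which the estimates \eqref{eq:low mart bd}--\eqref{eq:up mart bd} apply verbatim, since they involve only $b$ and $\sigma$. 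Optimizing over $p$ recovers the same functions $f_1$ and $f_2$ as in Theorem \ref{hoe_cont_thm}. The main obstacle I anticipate is the rigorous justification of $J_t \overset{d}{=} -J_t$: one must give a precise meaning to the $\Pi$-integral (the compensator vanishes by symmetry, so on $|y|<1$ it should be interpreted as a principal-value limit over $|y|>\varepsilon$) and then transfer the $y$-symmetry of $\Pi$ at the level of sample paths to equality in law for $J_t$ and $-J_t$; once this is in hand, the bound collapses onto the continuous-case computation.
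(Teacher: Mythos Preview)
Your proposal is correct and follows essentially the same approach as the paper's proof: a Girsanov change of measure (with density depending only on $B$) to remove the drift, a symmetry argument showing $\qq(X^1_t > X^1_0) = \tfrac12$, and then the H\"older-based bounds from Theorem~\ref{hoe_cont_thm} carried over verbatim. Your symmetry step (sum of two independent symmetric random variables, one with a continuous law) is a slightly cleaner variant of the paper's conditioning computation, and you correctly read the oddness hypothesis as $\psi_1(t,-y)=-\psi_1(t,y)$.
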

\medskip

\noindent\textit{Proof.} We start by fixing a $t>0$ and changing the underlying probability measure $\pp$ to an equivalent probability measure $\qq$ according to
\eq
\frac{\mathrm{d}\qq}{\mathrm{d}\pp}
=\exp\Big(-\int_0^t \sigma(s)^{-1}b(s,\cdot)\,\mathrm{d}B_s-\frac{1}{2}\int_0^t |\sigma(s)^{-1}b(s,\cdot)|_2^2\,\mathrm{d}s\Big).
\en
Then, in view of the independence of the continuous and the jump parts of $X$ under $\pp$ and the Girsanov Theorem, the process $X$ solves the stochastic differential equation
\eq
\mathrm{d}X_s=\sigma(s)\,\mathrm{d}B^\qq_s+\int_{\rr^m} \psi(s,y)\,\Pi(\mathrm{d}s,\mathrm{d}y), \quad s\in[0,t] \label{sde_no_drift}
\en
with a standard Brownian motion $B^\qq$ under $\qq$ and initial condition $X_0=x_0$. Moreover, the random variables 
\[
U^{(1)}_t:=\Big(\int_0^t \sigma(s)\,\mathrm{d} B^\qq_s\Big)_1\;\;\;\mathrm{and}\;\;\;
U^{(2)}_t:=\int_0^t\int_{\rr^m} \psi_1(s,y)\,\Pi(\mathrm{d}s,\mathrm{d}y) 
\]
are independent under $\qq$ and their distributions $\eta^{(1)}$ and $\eta^{(2)}$ are symmetric. Hence,
\begin{align*}
\qq(X^1_t>X^1_0)& =\qq\big(U^{(1)}_t+U^{(2)}_t>0\big)\\
&=\int_0^\infty \qq\big(U^{(1)}_t>-c\big)\,\eta^{(2)}(\mathrm{d}c)
+\int_{-\infty}^0 \qq\big(U^{(1)}_t>-c\big)\,\eta^{(2)}(\mathrm{d}c)\\
&=\int_0^\infty \qq\big(U^{(1)}_t>-c\big)\,\eta^{(2)}(\mathrm{d}c)
+\int_{-\infty}^0 1-\qq\big(U^{(1)}_t>c\big)\,\eta^{(2)}(\mathrm{d}c)\\
&=\frac{1}{2}.
\end{align*}
From now on, one can follow the lines of the proof of Theorem \ref{hoe_cont_thm} to finish the proof. \ep

\section{Digital options and the implied volatility slope} \label{sec_appl}

Suppose that the one-dimensional, positive process $S$ models the price of a financial asset, and that $\pp$ is the pricing measure. The riskless rate is $r>0$. The holder of a digital call option with maturity $T$ and strike $K$ receives the payoff $\mathbf{1}_{\{S_T > K\}}$ at maturity. Digital options are peculiar in that the owner receives the full payoff as soon as they are only slightly in the money, as opposed to call options, say, which kick in gradually. By the risk-neutral pricing formula, the value of the digital call at time zero is
\[
  D(K,T) := e^{-rT}\,\ev[\mathbf{1}_{\{S_T>K\}}] =  e^{-rT}\,\pp(S_T>K).
\]
There is a considerable literature on short-maturity approximations for option prices. For OTM (out-of-the-money; $S_0<K$) or ITM (in-the-money; $S_0>K$) digitals, the first order approximation is clear: As soon as the underlying $S$ is a.s. right-continuous at $t=0$, the Dominated Convergence Theorem yields
\begin{align*}
  \lim_{T\to0} D(K,T) =
  \begin{cases}
    0 & \text{if}\ S_0 < K \qquad \text{(OTM)} \\
    1 & \text{if}\ S_0 > K \qquad \text{(ITM).}
  \end{cases}
\end{align*}
Finer information on the OTM decay (which trivially also covers the ITM behavior) comes from small time large deviations principles for the underlying. E.g., see Forde and
Jacquier~\cite{FoJa09} for the case of the Heston model and references about
other diffusion processes.
Our CLT-type results are useful in the \emph{ATM} case $S_0=K$. As an immediate consequence of our limit theorems, we enunciate:
\begin{thm}\label{thm:1/2}
If the process $S$ satisfies the assumptions of Theorem~\ref{CLTj} (in particular, if it satisfies those of Theorem~\ref{stCLT} or Remark~\ref{rmk:SDE}), and the limit law is non-degenerate, then the limiting price of an at-the-money digital call is $1/2$:
\begin{equation}\label{eq:D 1/2}
\lim_{T\to0} D(S_0,T) = \frac12.
\end{equation}
\end{thm}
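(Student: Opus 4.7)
The statement is essentially a one-line corollary of the Central Limit Theorems already established, so my proof proposal is almost entirely bookkeeping. The plan is to reduce the digital option price to a probability involving a standardized increment of $S$, apply the relevant CLT with the identity function, and then invoke the portmanteau theorem to pass the limit through the event $\{\cdot>0\}$.

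\textbf{Step 1 (disposing of the discount factor).} Write
\begin{equation*}
  D(S_0,T) = e^{-rT}\,\pp(S_T>S_0).
\end{equation*}
Since $e^{-rT}\to 1$ as $T\searrow 0$, it suffices to show $\pp(S_T>S_0)\to 1/2$.

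\textbf{Step 2 (applying the CLT).} Apply Theorem~\ref{CLTj} (or Theorem~\ref{stCLT}, depending on which assumption $S$ satisfies) with $n=m=1$ and $f$ equal to the identity map on $\setR$. Then $(Df)(S_0)=1$, so
\begin{equation*}
  \frac{S_T-S_0}{\sqrt{T}}\xrightarrow{d} N_f,\qquad T\searrow 0,
\end{equation*}
where $N_f\sim\mcal{N}(0,V)$ with $V=LL^\top$.

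\textbf{Step 3 (non-degeneracy and portmanteau).} By hypothesis, the limit law is non-degenerate, i.e.\ $V>0$, so $N_f$ has a continuous distribution and $\pp(N_f=0)=0$. In particular, $\{0\}$ is a $\pp\circ N_f^{-1}$-null boundary of the open half-line $(0,\infty)$, so by the portmanteau theorem
\begin{equation*}
  \pp(S_T>S_0)
  =\pp\!\left(\frac{S_T-S_0}{\sqrt{T}}>0\right)
  \xrightarrow[T\searrow 0]{} \pp(N_f>0)=\tfrac{1}{2},
\end{equation*}
the last equality being the symmetry of the centered Gaussian. Combined with Step~1 this gives \eqref{eq:D 1/2}.

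There is essentially no obstacle to overcome: the only subtle point is making sure that the convergence in distribution supplied by the CLT does transfer to the particular event $\{S_T>S_0\}$, which is handled by non-degeneracy (ensuring $\pp(N_f=0)=0$) and the portmanteau theorem. The assumption that the limit law is non-degenerate cannot be dropped, as the squared Brownian motion example earlier in the paper already illustrates.
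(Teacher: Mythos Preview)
Your proof is correct and matches the paper's approach: the paper does not give a formal proof at all, presenting the result simply as ``an immediate consequence of our limit theorems'' (and earlier writing that $\lim_{t\searrow 0}\pp(X_t>x_0)=1/2$ ``clearly'' holds when the limit law is non-degenerate). Your Steps~1--3 just make explicit the two ingredients the paper leaves implicit, namely that the discount factor is harmless and that non-degeneracy lets the portmanteau theorem push the CLT through the half-line event.
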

This (intuitive) result captures virtually all diffusion-based models that have been considered (Black-Scholes, CEV, Heston, Stein-Stein, etc.). Although it seems to be
new in its generality, in particular for jump processes, some special cases can
be inferred from the literature (see the comment at the end of this section).
\bigskip

The jump processes used in financial modelling are often L\'evy processes. It is clear that a compensated compound Poisson process
will yield an (unrealistic) ATM digital price limit of either zero or one (see the remark before Example \ref{exm:Bessel}).
As for the infinite activity case, limit laws are not the appropriate way to get a result like \eqref{eq:D 1/2}. Doney and Maller \cite{DoMa02} have determined all L\'evy processes that admit a short-time CLT, with a criterion involving the tail of the L\'evy measure. While there do exist infinity activity L\'evy processes that satisfy a CLT \cite[Remark~9]{DoMa02}, the L\'evy processes that have been considered in mathematical finance are typically \emph{not} of this kind. For instance, it is easy to see from the characteristic function that the variance gamma process \cite{MaCaCh98} does not admit \emph{any} non-degenerate limit law for $t\to0$, for any normalization. We will discuss these issues further in the more application-oriented companion paper \cite{GeSh12}.

\bigskip

Finally, we discuss the implied volatility skew. Suppose that the underlying $S$ generates the call price surface $C(K,T)$:
\[
  C(K,T) = e^{-rT}\,\ev[(S_T-K)^+], \quad K>0,\quad T>0.
\]
Then the implied volatility (see e.g.~\cite{Le05}) for strike $K$ and maturity $T$ is the volatility $\sigma_{\mathrm{imp}}(K,T)$ that makes the Black-Scholes call price equal to $C(K,T)$:
\[
  C_{\mathrm{BS}}(K,\sigma_{\mathrm{imp}},T) = C(K,T).
\]
The map $K\to\sigma_{\mathrm{imp}}(K,T)$ is called the volatility smile for maturity $T$. It is also called the volatility skew, because it is often
monotone instead of smile-shaped, but we will reserve the term skew for the \emph{derivative} $\partial_K \sigma_{\mathrm{imp}}(K,T)$.
If $C(K,T)$ is smooth in $K$, it equals (we omit arguments)
\[
  \partial_K \sigma_{\mathrm{imp}} =
  -\frac{\partial_K C_{\mathrm{BS}}-\partial_K C}{\partial_\sigma C_{\mathrm{BS}}}.
\]
Under mild assumptions (e.g., if the law of $S_T$ is absolutely continuous), we have
\begin{equation}\label{eq:diff call}
  \partial_K C = - e^{-rT}\,\pp(S_T \geq K) = -D(K,T),
\end{equation}
from which we deduce the (well-known) connection between the volatility skew and the price of a digital call (see e.g.~\cite{Ga06}):
\[
  \partial_K \sigma_{\mathrm{imp}} =
  -\frac{D(K,T)+\partial_K C_{\mathrm{BS}}}{\partial_\sigma C_{\mathrm{BS}}}.
\]
Inserting the explicit Black-Scholes vega and digital
price (see e.g.~\cite{MuRu05}), we obtain
\[
  \partial_K \sigma_{\mathrm{imp}} =
  \frac{-D(K,T)+\Phi(-\sigma_{\mathrm{imp}}\sqrt{T}/2)}{K \sqrt{T}\,n(\sigma_{\mathrm{imp}}\sqrt{T}/2)},
\]
with $\Phi$ and $n$ denoting the standard normal cdf and density, respectively. For $T\to0$, we have $\sigma_{\mathrm{imp}}\sqrt{T}=o(1)$
under the following mild assumptions \cite[Proposition 4.1]{RoRu09}: 
\begin{align}
  &(S_0-K)^+ \leq C(K,T) \leq S_0 \quad \text{(no arbitrage bounds),} \label{eq:no arb} \\
  & \lim_{T\to0} C(K,T) = (S_0-K)^+, \label{eq:ass lim} \\
  & T \mapsto C(K,T) \quad \text{is non-decreasing.} \label{eq:ass dec}
\end{align}
Therefore,
\begin{equation}\label{eq:slope asympt}
  \partial_K \sigma_{\mathrm{imp}}
  \sim \frac{\sqrt{2\pi}}{K \sqrt{T}} \left(\frac12 -D(K,T)
      - \frac{\sigma_{\mathrm{imp}}\sqrt{T}}{2\sqrt{2\pi}}
      + O((\sigma_{\mathrm{imp}}\sqrt{T})^3) \right), \quad T\to0.
\end{equation}
We see that the small time behavior of the skew is related to that of the digital price. At the money, the latter will typically tend to $1/2$
(see Theorem \ref{thm:1/2}), and so higher order estimates are needed to get the first order asymptotics of the ATM skew $\partial_K \sigma_{\mathrm{imp}}|_{K=S_0}$. To this end, we apply our Theorem \ref{hoe_cont_thm}, and compare our findings with the standard
model free slope bounds \cite[page 36]{FoPaSi00} 
\begin{equation}\label{eq:std bounds}
  -\frac{\sqrt{2\pi}}{S_0 \sqrt{T}}(1-\Phi(d_2)) e^{-rT + d_1^2/2}
  \leq \frac{\partial \sigma_{\mathrm{imp}}}{\partial K}
  \leq \frac{\sqrt{2\pi}}{S_0 \sqrt{T}}\Phi(d_2) e^{-rT + d_1^2/2},
\end{equation}
where
\begin{align*}
  d_1 &= \frac{\log(S_0/K) + (r+
    \tfrac12 \sigma^2_{\mathrm{imp}})T}{\sigma_{\mathrm{imp}}\sqrt{T}}, \\
  d_2 &= d_1 - \sigma_{\mathrm{imp}} \sqrt{T}.
\end{align*}
Such bounds can give guidance on model choice; recall that
the market slope seems to grow like $T^{-1/2}$ for short maturities~\cite{AlLePoVi08}.
Note that the following result accomodates stochastic interest rates,
and recall that we assume in this section that the dimension is $m=1$ . 
Under stochastic interest rates, the digital call price is
\begin{equation}\label{eq:stoch r dig}
  D(K,T) = \ev\big[e^{-\int_0^T r(s) \dd s}
    \mathbf{1}_{\{S_T>K\}} \big].
\end{equation}
To calculate the implied volatility, a \emph{deterministic} rate~$r$ has
to be chosen (e.g., by $e^{-rT} = \ev[\exp(-\int_0^T r(s) \dd s)]$).
This choice is irrelevant for Theorem~\ref{thm:slope bounds}, though.
\begin{thm}\label{thm:slope bounds}
Assume that the price process satisfies the SDE
\[
\dd S_t/S_t = r(t) \dd t + \sigma(t) \dd B_t
\]
with the stochastic short rate process $(r(t))_{t\geq0}$,
and that the log-price $X=\log S$, whose drift is $b(t)=r(t)-\tfrac12\sigma^2(t)$, satisfies the assumptions of Theorem \ref{hoe_cont_thm}.
Assume further that $\partial_K C(K,T)=-D(K,T)$ holds (cf.~\eqref{eq:diff call}) and that~\eqref{eq:ass lim} and~\eqref{eq:ass dec} are satisfied. Then we have the ATM slope bounds
\begin{align*}
  \partial_K \sigma_{\mathrm{imp}}|_{K=S_0}
  &\geq \frac{\sqrt{2\pi}}{K \sqrt{T}} \left( -C\sqrt{T} 
    - \frac{\sigma_{\mathrm{imp}} \sqrt{T}}{2 \sqrt{2\pi}}
    + O(T) + O((\sigma_{\mathrm{imp}} \sqrt{T})^3) \right), \\
  \partial_K \sigma_{\mathrm{imp}}|_{K=S_0}
  &\leq \frac{\sqrt{2\pi}}{K \sqrt{T}} \left( C\sqrt{T} 
    - \frac{\sigma_{\mathrm{imp}} \sqrt{T}}{2 \sqrt{2\pi}}
    + O(T) + O((\sigma_{\mathrm{imp}} \sqrt{T})^3) \right), 
\end{align*}
where
\[
  C = \sqrt{\frac{\log 2}{2}} \|\sigma^{-1}b\|_{2,\infty}.
\]
\end{thm}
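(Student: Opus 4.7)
The plan is to combine the short-time bounds on $\pp(X_T>X_0)$ from Theorem \ref{hoe_cont_thm} with the digital-to-slope relation \eqref{eq:slope asympt}. The work is essentially bookkeeping and splits into three short steps.

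First I would reduce the discounted, stochastic-rate digital price \eqref{eq:stoch r dig} to a pure probability under $\pp$. Writing
\[
D(S_0,T)=\pp(S_T>S_0)+\ev\big[(e^{-\int_0^T r(s)\dd s}-1)\,\mathbf{1}_{\{S_T>S_0\}}\big],
\]
the hypotheses of Theorem~\ref{hoe_cont_thm} applied to $X=\log S$ give that $b(t)=r(t)-\tfrac12\sigma(t)^2$ is bounded and that $\sigma^2$ is bounded away from $0$ and locally bounded near $0$, so $r$ is bounded on some $[0,T_0]$. Hence $|e^{-\int_0^T r(s)\dd s}-1|=O(T)$ uniformly, giving $D(S_0,T)=\pp(S_T>S_0)+O(T)$. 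Since $\log$ is strictly increasing, $\{S_T>S_0\}=\{X_T>X_0\}$, and therefore
\[
D(S_0,T)=\pp(X_T>X_0)+O(T).
\]

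Second, I would invoke Theorem~\ref{hoe_cont_thm} to control $\pp(X_T>X_0)$. Combining the sandwich \eqref{eq:P bounds} with the stated small-time expansions of $e^{f_1}$ and $e^{f_2}$ yields
\[
-C\sqrt{T}+O(T)\ \leq\ \tfrac12-D(S_0,T)\ \leq\ C\sqrt{T}+O(T),
\]
where $C=\sqrt{\log 2/2}\,\|\sigma^{-1}b\|_{2,\infty}$ is precisely the constant appearing in the statement of Theorem~\ref{thm:slope bounds}.

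Third, I would substitute these two-sided bounds on $\tfrac12-D(S_0,T)$ into the ATM specialization of \eqref{eq:slope asympt}. Taking the upper (resp.\ lower) estimate produces the upper (resp.\ lower) slope bound claimed in the theorem; the $O(T)$ remainder from step two gets multiplied by $\sqrt{2\pi}/(K\sqrt{T})$ and is therefore absorbed into the $O(T)+O((\sigma_{\mathrm{imp}}\sqrt{T})^3)$ already present. I do not foresee a genuine obstacle; the only points requiring care are verifying that $r$ is locally bounded near $0$ (so that the discount factor contributes only $O(T)$) and checking that the no-arbitrage/monotonicity conditions \eqref{eq:no arb}--\eqref{eq:ass dec} and the identity $\partial_K C=-D(K,T)$ underlying \eqref{eq:slope asympt} transfer to the stochastic-rate setting, both of which are part of the standing hypotheses.
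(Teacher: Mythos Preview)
Your proposal is correct and follows essentially the same three-step approach as the paper: reduce the stochastic-rate digital to $\pp(X_T>X_0)+O(T)$ via the discount factor being $1+O(T)$, apply Theorem~\ref{hoe_cont_thm} to obtain the two-sided bound $\tfrac12 - D(S_0,T)=\pm C\sqrt{T}+O(T)$, and substitute into~\eqref{eq:slope asympt}. Your write-up is in fact more explicit than the paper's (e.g.\ spelling out $\{S_T>S_0\}=\{X_T>X_0\}$ and the decomposition of the discount factor); the only caveat is that ``locally square integrable'' for $\sigma$ does not literally imply ``locally bounded near $0$'', so the $O(T)$ bound on the discount factor tacitly uses a mild extra regularity that the paper also assumes without comment.
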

\begin{proof}
According to~\eqref{eq:stoch r dig}, the ATM digital price equals
\[
   D(S_0,T) = \ev\big[e^{-\int_0^T r(s) ds}\,\mathbf{1}_{\{X_T>x_0\}} \big].
\]
The discount factor is $1+O(T)$, so we can apply Theorem \ref{hoe_cont_thm} to conclude
\[
  \frac12 - C \sqrt{T} + O(T) \leq D(S_0,T) \leq  \frac12 + C \sqrt{T} + O(T).
\]
Now, the result follows from \eqref{eq:slope asympt}. Note that $\sigma_{\mathrm{imp}}\sqrt{T}=o(1)$ by \cite[Proposition~4.1]{RoRu09}, since we assume \eqref{eq:ass lim} and \eqref{eq:ass dec}, and \eqref{eq:no arb} is satisfied in our setup.
\end{proof}

\medskip

The bounds in Theorem~\ref{thm:slope bounds} are asymptotically stronger than
the general estimate~\eqref{eq:std bounds},
which is of order $O(T^{-1/2})$, since $\sigma_{\mathrm{imp}} \sqrt{T}=o(1)$. If the Berestycki-Busca-Florent formula~\cite{BeBuFl02} holds, then implied volatility tends to a constant. Therefore, our bounds are \emph{considerably} stronger than~\eqref{eq:std bounds} in this case, namely of order $O(1)$. The models covered by Theorem~\ref{thm:slope bounds}
thus do \emph{not} match the empirical slope behavior $T^{-1/2}$, similarly to
stochastic volatility models~\cite{Le00}, whose slope also behaves like $O(1)$.

To conclude our discussion of ATM digitals and the implied volatility skew,
note that, for some diffusion processes, the result in Theorem~\ref{thm:1/2}
is implicitly in the literature. To wit, by~\eqref{eq:slope asympt},
a non-exploding ATM slope requires a limit price of $1/2$ of the digital.
See Durrleman~\cite[page~59]{Du04} for a general expression for the implied
volatility slope that shows that it does not explode, e.g., in the Heston model.

\bibliographystyle{siam}
\bibliography{clt}

\end{document}